\newtheorem{thm}{Theorem}[section]
\newtheorem{lma}[thm]{Lemma}
\newtheorem{cor}[thm]{Corollary}
\newtheorem{defn}[thm]{Definition}
\newtheorem{prop}[thm]{Proposition}
\newtheorem{rem}[thm]{Remark}
\newtheorem{exam}[thm]{Example}
\newcommand{\R}{\mathbb{R}}
\newcommand{\tr}{\textnormal{tr}}
\newcommand{\N}{\mathbb{N}}
\newcommand{\C}{\mathbb{C}}
\providecommand{\norm}[1]{\lVert#1\rVert}
\renewcommand{\geq}{\geqslant}
\renewcommand{\leq}{\leqslant}
\renewcommand{\epsilon}{\varepsilon}
\renewcommand{\l}{\text{loc}}
\renewcommand{\i}{\mathbf{i}}
\newcommand{\id}{\textnormal{Id}}
\renewcommand{\geq}{\geqslant}
\renewcommand{\leq}{\leqslant}
\renewcommand{\i}{\mathbf{i}}
\providecommand{\I}{\mathcal{I}}
\renewcommand{\l}{\mathcal{L}}
\providecommand{\p}{\mathbf{p}}
\providecommand{\q}{\mathbf{q}}
\providecommand{\i}{\mathbf{i}}
\providecommand{\norm}[1]{\lVert#1\rVert}
\providecommand{\one}{\mathbf{1}}
\begin{document}

\title[]{Approximating integrals with respect to stationary probability measures of iterated function systems}

\author{Italo Cipriano} \address{Facultad de Matem\'aticas,
Pontificia Universidad Cat\'olica de Chile (PUC), Avenida Vicu\~na Mackenna 4860, Santiago, Chile}
\email{icipriano@gmail.com }
\urladdr{http://www.icipriano.org/}

\author{Natalia Jurga} \address{Department  of  Mathematics,  University  of  Surrey,  Guildford,  GU27XH, UK}
\email{N.Jurga@surrey.ac.uk}
\urladdr{}

\date{\today}

\subjclass[2010]{}

\begin{abstract}
We study fast approximation of integrals with respect to stationary probability measures associated to iterated functions systems on the unit interval. We provide an algorithm for approximating the integrals under certain conditions on the iterated function system and on the function that is being integrated. We apply this technique to estimate Hausdorff moments, Wasserstein distances and Lyapunov exponents of stationary probability measures.
\end{abstract}

\keywords{}
\maketitle

\section{Introduction}\label{intro}

Let $\I:=\{1, \ldots, N\}$ for some $N\in\{2,3,\ldots\}$ and let $\Phi=\{\phi_i\}_{i \in \I}$ be an iterated function system consisting of Lipschitz contractions  $\phi_i:[0,1] \to [0,1]$. It is well-known that there exists a unique, non-empty and compact set $\Lambda\subset [0,1]$  such that
$$
\Lambda=\bigcup_{i\in \I} \phi_i(\Lambda).
$$
This set is called the attractor of $\Phi.$ Given a probability vector $\p=(p_1,p_2,\ldots,p_N)$ (meaning that each $0< p_i < 1$  and $\sum_{i\in\I} p_i=1$), there exists a unique probability measure $\mu=\mu^{(\Phi,\p)}$ such that
\begin{equation}\label{SPMeq}
\int \varphi \textup{d}\mu= \sum_{i\in\I} p_i \int  \varphi \circ \phi_i \textup{d}\mu
\end{equation}
for every continuous function $\varphi:[0,1]\to \R.$ This probability measure is called the stationary probability measure associated to $\Phi$ and $\p.$ Its support is the attractor of $\Phi.$

Iterated function systems were first studied by Hutchinson  \cite{Hutchinson} who proved the existence and uniqueness of the attractor and stationary probability measure. Since the late eighties, the problem of algorithmically estimating stationary probability measures has been a topic of active research, see \cite{Strichartz??,Bla01,Fro99,Obe05,Galatolo_2016}.

In these notes we focus on the related problem of algorithmically estimating the \emph{integrals} of functions with respect to stationary measures. In many areas of analysis, dynamical systems and probability theory, important quantities can be expressed in terms of such integrals, see section \ref{app}. For example, in fractal geometry, the Hausdorff dimension of the attractor $\Lambda$ can be bounded or sometimes explicitly calculated in terms of an expression involving the Lyapunov exponent of particular stationary measures, see section \ref{lyap}. On the other hand, in dynamical systems, equilibrium measures often take the form of a stationary probability measure, and therefore it is a natural problem to study the approximation of integrals with respect to them. Apart from some special cases, for instance whenever the stationary measure is absolutely continuous with an explicit known density, it is impossible to analytically calculate these integrals, and so one must turn to approximating them. We use an operator theoretic approach similar to \cite[Theorem 1]{pollicott}, where Jenkinson and Pollicott constructed an algorithm yielding accelerated approximations of integrals of functions with respect to Lebesgue measure. Indeed, our main Theorem \ref{main} can be seen as an analogue of their result. In particular we provide

\begin{itemize}
\item a sufficient condition on the iterated function system $\Phi,$ and
\item a class $\mathcal{C}_{\Phi}$ of functions $g:[0,1]\to \R,$
\end{itemize}
such that the integral 
$$\int_0^1 g \textup{d}\mu$$
can be approximated efficiently, where $\mu$ denotes any stationary probability measure associated to $\Phi$. In particular, in Theorem \ref{main_theorem} we provide an algorithm which produces approximations $\mu_k(g)$ to $\int g\textup{d}\mu$ such that the approximations approach the integral at a super-exponential rate. We demonstrate the performance of this algorithm by using it to estimate Hausdorff moments, Wasserstein distances and Lyapunov exponents.

\section{Main results}

We introduce the following contraction condition motivated by the definition introduced in \cite[Definition 2.2]{bj}.

\begin{defn}
A family of maps $\Phi=\{\phi_i\}_{i \in \I}$, $\phi_i:[0,1] \to [0,1]$ is called \emph{complex contracting} if there exists a non-empty, bounded, connected and open set $\mathcal{D} \subset \C$ such that $[0,1] \subset \mathcal{D}$ and 
\begin{enumerate}
\item each $\phi_i$ extends holomorphically to $\mathcal{D}$
\item $\phi_i'$ is continuous on the boundary of $\mathcal{D}$
\item $\sup_{i \in \I} \sup_{z \in \mathcal{D}} |\phi_i'(z)| <1.$
\end{enumerate}
\end{defn}

The significance of the above definition is that if $\Phi$ is a complex contracting family of maps then there exists some complex domain $D \subset \mathcal{D}$ such that $[0,1] \subset D$, each $\phi_i$ extends holomorphically to $D$ and $\overline{\phi_i(D)} \subset D$, see \cite[Lemma 2.4]{bj}. In this case we say that $D$ is an \emph{admissable domain} for $\Phi$. The existence of such a domain is what is actually required in order to construct our algorithm, however in many cases it is easier to check that a family is complex contracting than to check that such a domain $D$ exists.

\begin{defn}
Given a complex contracting iterated function system $\Phi=\{\phi_i\}_{i \in \I}$ on the unit interval, we define the class of functions  
$$
\mathcal{C}_{\Phi}:=\bigcup_{D}\left\{ g:[0,1] \to \R: g  \mbox{ has an extension to some bounded holomorphic function on } D \right\},
$$
where the union is taken over all admissable domains for $\Phi.$
\end{defn}

In particular if $\Phi$ is complex contracting, then all sufficiently small Euclidean $\epsilon$-neighbourhoods of $[0,1]$ are admissable, see \cite[Remark 2.5]{bj}. Therefore any function which is bounded and holomorphic on some Euclidean $\epsilon$-neighbourhood of $[0,1]$ belongs to $\mathcal{C}_{\Phi}$. 


We introduce some notation. Let $\I^n=\{i_1\cdots i_n: i_j \in \I\}$ and $\phi_{i_1 \ldots i_n}:=\phi_{i_1} \circ \cdots \circ \phi_{i_n}$ for $n\in\N.$ Define $\I^{\ast}= \bigcup_{n \in \N} \I^n$. For $\i=i_1 \ldots i_n \in \I^{\ast}$ and $1 \leq k \leq n-1$ we define
$$
\begin{aligned}
\sigma^k \i:=&i_{k+1} \ldots i_n i_1 \ldots i_k,\\
\tilde{\sigma}^k \i:=&i_{k+1} \ldots i_n \mbox{ and }\\
p_{\i}:=&\prod_{i=1}^n p_i.
\end{aligned}
$$
Define $\Pi: \I^{\N}\to \Lambda$ by 
$$\Pi(i_1 i_2 \ldots):=\lim_{n\to\infty} \phi_{i_1} \circ \cdots \circ \phi_{i_n}\left([0,1]\right).$$
For each $\i \in \I^{\ast}$ let $(\i)^{\infty}$ denote the periodic point $(\i)^{\infty}=\i\i\i... \in \I^{\N}$ and let $z_{\i}$ denote the fixed point $z_{\i}=\Pi((\i)^{\infty})$ of $\phi_{\i}$. Finally, we say that an iterated function system is non-overlapping if $\{\phi_i((0,1))\}_{i \in \I}$ are pairwise disjoint.\\

The following is our main result. 

\begin{thm}\label{main_theorem}
Let $\Phi=\{\phi_i\}_{i \in \I}$ be a complex contracting iterated function system on the unit interval and $\p=(p_1, \ldots, p_N)$ a probability vector. Given $g\in \mathcal{C}_{\Phi},$ define
$$t_m:=\sum_{\i \in \I^m} p_{\i} \frac{1}{1-\phi_{\i}'(z_{\i})}$$
and
$$\tau_m:=\sum_{\i \in \I^m} p_{\i} \frac{g(z_{\i})+ g(z_{\sigma \i})+ \ldots + g(z_{\sigma^m \i}) }{1-\phi_{\i}'(z_{\i})}.$$
Also define
$$\alpha_{n}:= \sum_{l=1}^n \frac{(-1)^l}{l!} \sum_{n_1+ \ldots +n_l=n} \sum_{j=1}^l \frac{\tau_{n_j}}{n_j} \prod_{1 \leq m \leq l, m \neq j} \frac{t_{n_m}}{n_m}$$
and
$$a_n:= \sum_{l=1}^n \frac{(-1)^l}{l!} \sum_{n_1+ \ldots +n_l=n} \prod_{i=1}^l \frac{t_{n_i}}{n_i}.$$
Then for
$$\mu_k(g):= \frac{\sum_{n=0}^k \alpha_n}{\sum_{n=0}^k na_n}$$
and $\mu$ the stationary probability measure associated to $\Phi$ and $\p,$  
\begin{eqnarray}
\left|\int_0^1 g \textup{d} \mu-\mu_k(g)\right| < C\exp(-\lambda k^2)
\label{error}
\end{eqnarray} for some constants $C, \lambda>0$ which are independent of $k.$
\label{main}
\end{thm}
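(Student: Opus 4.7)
The strategy is the operator-theoretic/Fredholm-determinant approach of Ruelle and Jenkinson--Pollicott. The plan is to identify the four sequences $\{t_m\}$, $\{\tau_m\}$, $\{a_n\}$, $\{\alpha_n\}$ as traces and Fredholm--Grothendieck coefficients of a natural one-parameter family of transfer operators, express $\int g\,d\mu$ as a ratio of values of two associated entire functions, and then extract the super-exponential rate from nuclearity.

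Concretely, fixing an admissable domain $D$ with $\overline{\phi_i(D)} \subset D$ for every $i \in \I$, I would work on the Hardy space $H^2(D)$ and introduce the weighted transfer operator
$$\l_s f(z) := \sum_{i \in \I} p_i\, e^{s g(\phi_i(z))}\, f(\phi_i(z)), \qquad s \in \C \text{ near } 0.$$
Since each $\phi_i$ is a strict holomorphic contraction of $D$ into itself and $g \in \mathcal{C}_\Phi$ extends holomorphically to $D$, $\l_s$ is a nuclear operator of order zero, uniformly for $s$ in a neighbourhood of the origin. A contour-integral computation (a holomorphic Atiyah--Bott fixed-point formula) gives $\tr \l_0^m = t_m$; differentiating in $s$ at $s=0$ and identifying the Birkhoff sum of $g$ along the periodic orbit through $z_\i$ with $g(z_\i) + g(z_{\sigma \i}) + \cdots + g(z_{\sigma^{m-1}\i})$ gives $\partial_s|_{s=0} \tr \l_s^m = \tau_m$. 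Substituting these into the Fredholm identity
$$F(z,s) := \det(I - z\l_s) = \exp\!\left(-\sum_{n \geq 1} \frac{z^n}{n} \tr \l_s^n \right)$$
and expanding in $z$ shows that $a_n$ and $\alpha_n$ are exactly the Taylor coefficients of the entire functions $A(z) := F(z,0) = \sum_n a_n z^n$ and $B(z) := \partial_s F(z,s)|_{s=0} = \sum_n \alpha_n z^n$.

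The leading eigenvalue $\lambda(s)$ of $\l_s$ is simple, depends analytically on $s$ near $0$, and satisfies $\lambda(0) = 1$; Kato--Ruelle perturbation theory together with the stationarity of $\mu$ gives $\lambda'(0) = \int_0^1 g\,d\mu$. Since $1/\lambda(s)$ is a simple zero of $F(\cdot, s)$, implicit differentiation of $F(1/\lambda(s), s) = 0$ at $s = 0$ yields
$$\int_0^1 g \, d\mu = \frac{B(1)}{A'(1)},$$
which is precisely the $k \to \infty$ limit of $\mu_k(g) = \sum_{n=0}^k \alpha_n / \sum_{n=0}^k n a_n$. The principal obstacle, and the step that delivers the super-exponential rate, is the Hadamard-type estimate
$$|a_n|,\ |\alpha_n| \leq C\, \theta^{n^2}$$
for some $C > 0$ and $\theta \in (0,1)$. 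This follows from Grothendieck's theory of operators of nuclear order zero once one verifies that the singular values of the composition operators $f \mapsto f \circ \phi_i$ on $H^2(D)$ decay exponentially in $n$, which is itself a reproducing-kernel estimate based on the strict nesting $\overline{\phi_i(D)} \subset D$ and uniform in $s$ near $0$. Given this bound, the tails $|A(1) - A_k(1)|$, $|A'(1) - A_k'(1)|$, $|B(1) - B_k(1)|$ are all $O(\theta^{(k+1)^2})$, and combining with $A'(1) \neq 0$ (from simplicity of $\lambda(0) = 1$) via a routine manipulation of the difference $\mu_k(g) - B(1)/A'(1)$ produces the claimed bound $C \exp(-\lambda k^2)$.
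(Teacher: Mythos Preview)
Your proposal is correct and follows essentially the same route as the paper: the same one-parameter transfer operator $\l_s$, identification of $t_m$ and $\tau_m$ as $\tr\l_0^m$ and $\partial_s|_{s=0}\tr\l_s^m$, the determinant expansion giving $a_n = b_n(0)$ and $\alpha_n = b_n'(0)$, perturbation theory for $\lambda_1'(0) = \int g\,d\mu$, implicit differentiation of $F(1/\lambda_1(s),s)=0$, and the coefficient bound $|b_n(s)| \le C e^{-\lambda n^2}$ from the exponential eigenvalue decay. The only cosmetic differences are that the paper works on the Bergman space $A^2(D)$ and invokes the Bandtlow--Jenkinson eigenvalue estimates rather than Hardy space and Grothendieck nuclearity, and obtains the bound on $|\alpha_n| = |b_n'(0)|$ via the Cauchy integral formula from the uniform bound on $|b_n(s)|$ for $|s|\le 1$ rather than directly from singular-value decay.
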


Note that while our approximations are non-effective, in some special cases the constants $C, \lambda$ appearing in (\ref{error}) can be made explicit. For example, when each $\phi_i$ takes the form of a linear fractional transformation, that is, $\phi_i(x)= \frac{a_ix+b_i}{c_ix+d_i}$ for some constants $a_i, b_i, c_i, d_i$ (which in particular includes the important class of self-similar iterated function systems), the constants $C$ and $\lambda$ can be bounded similarly to \cite{mj, pj}. Also note that since the time taken to process $n$ steps of the algorithm is exponential in $n$, the error decreases super-polynomially fast in time. 

Finally, we observe that by using stationarity of the measure $\mu$, Theorem \ref{main} can also be used to obtain approximations of integrals of some \emph{piecewise analytic} functions. In particular, suppose that for some $K \in \mathbb{N}$, $\{\phi_{\i}([0,1])\}_{\i \in \I^K}$ are pairwise disjoint and let $g:[0,1]\to\R$ be a function (not necessarily continuous) such that each composition function $g \circ \phi_{\i}: [0,1]\to\R$ belongs to $\mathcal C_{\Phi}$ for every $\i \in \I^K.$ By continuous continuation there is a continuous function $\tilde{g}:[0,1]\to\R$ with $\tilde{g}|_{\phi_{\i}([0,1])}=g|_{\phi_{\i}([0,1])}$ for every $\i\in \I^K.$ Thus by stationarity,
\begin{eqnarray*}
\int_{0}^1 g(x) \textup{d}\mu (x)&=& \int_{0}^1 \tilde{g}(x) \textup{d}\mu (x) \\
&=& \sum_{\i \in \I^K} p_{\i} \int_{0}^1 \tilde{g} \circ \phi_{\i} (x) \textup{d}\mu (x) \\
&=&  \sum_{\i \in \I^K} p_{\i} \int_{0}^1 g \circ \phi_{\i}(x) \textup{d}\mu(x),
\end{eqnarray*}
so since $\phi_{\i}(D) \subset D$, Theorem \ref{main} can be applied to approximate each integral $ \int g \circ \phi_{\i} \textup{d}\mu$.


\section{Preliminaries}

\subsection{Trace class operators, determinants and approximation numbers} Given a compact operator $L:H \to H$ on a Hilbert space $H$, its $n$th \emph{approximation number} is defined as
$$s_n(L)= \inf\{\norm{L-K} : \textnormal{rank}(K) \leq n-1\}.$$

A bounded linear operator on a complex separable Hilbert space $H$ is called \emph{trace-class} if $\sum_{n=1}^{\infty} s_n(L)< \infty.$ Given a trace-class operator $L$, the \emph{trace} is defined as
$$\tr (L)= \sum_{n=1}^{\infty} \langle Le_n, e_n \rangle_H$$
where $\{e_n\}$ is any orthonormal basis and $\langle , \rangle_H$ is the inner product for the Hilbert space $H$. 

Given a compact operator $L$, we denote by $\{\lambda_n(L)\}_{n \in \N}$ the monotone decreasing sequence of non-zero eigenvalues of $L$, listed with algebraic multiplicity. If $L$ is trace-class then it is compact and its sequence of eigenvalues $\lambda_n(L)$ is absolutely summable. 

For a trace-class operator $L$, the \emph{Fredholm determinant} of $L$ can be defined as
\begin{eqnarray}
\det(\id -zL)= \prod_{n=0}^{\infty} (1-z\lambda_n(L))
\label{det}
\end{eqnarray}
which is an entire function of $z$ \cite[Theorem 3.3]{bs}, so in particular there exist $a_n \in \C$ such that
$$\det(\id-zL)= \sum_{n=0}^{\infty} a_nz^n.$$
Note that by (\ref{det}) the roots of $\det(\id -zL)$ are precisely the reciprocals of the eigenvalues of $L$, and the degree of each zero is given by the multiplicity of the corresponding eigenvalue. Moreover, each coefficient $a_n$ can be expressed in terms of the traces of $L^m$ for $1 \leq m \leq n$:
\begin{eqnarray}a_n= \sum_{m=1}^n \frac{(-1)^m}{m!} \sum_{\substack{n_1, \ldots, n_m \in \N^m \\ n_1 + \ldots +n_m=m}} \prod_{i=1}^m \frac{\textnormal{tr} L^{n_i}}{n_i}, \label{tc1} \end{eqnarray}
for a proof see for instance \cite[Proposition 3.2]{mj}.

On the other hand, by finding the coefficient of $z^n$ in (\ref{det}) we see that 
$$a_n=(-1)^n \sum_{i_1< \ldots <i_n} \lambda_{i_1}(L) \ldots \lambda_{i_n}(L)$$
therefore
\begin{eqnarray}
|a_n| \leq \sum_{i_1< \ldots< i_n} |\lambda_{i_1}(L) \ldots \lambda_{i_n}(L)| .\label{sing coef}
\end{eqnarray}

\subsection{Bergman space} \label{bergman}

We will use results of Bandtlow and Jenkinson \cite{bj, bj2} on operators acting on the Bergman space. Let $D$ be any connected, non-empty open subset of $\mathbb{C}$. We define the Bergman space $A^2(D)$ by
$$A^2(D):= \left\{ f:D \to \C \quad \textnormal{holomorphic} \quad : \quad \int_D |f(z)|^2 \textup{d}V(z)< \infty\right\}$$
where $V$ denotes 2-dimensional Lebesgue measure, normalised so that the unit ball has unit mass. $A^2(D)$ is a Hilbert space with inner product
$$\langle f,g\rangle_{A^2}= \int_D f(z)\overline{g(z)} \textup{d}V(z).$$

Consider the operator
$$Lf(z)= \sum_{i=1}^k w_i(z)f(\psi_i(z))$$
where $\psi_i$ are holomorphic functions on $D$ that satisfy $\bigcup_{i=1}^k \overline{\psi_i(D)} \subset D$ and $w_i$ are holomorphic bounded functions on $D$. Using the work of Bandtlow and Jenkinson \cite{bj,bj2} we have the following result.

\begin{prop} \label{bj}
The operator $L$ preserves the Bergman space $A^2(D)$. Moreover
\begin{enumerate}
\item  $L$ is a trace-class operator and there exist  constants $C, \lambda>0$ which are independent of $N$ and $w_i$ such that its eigenvalues satisfy
$$\lambda_n(L) \leq C\left(\sum_{i=1}^k \norm{w_i}_{\infty}\right) \exp(-\lambda n)$$
where $\norm{w_i}_{\infty}:= \sup_{z \in D} |w_i(z)|$,
\item the trace of $L^n$ is given by
$$\tr(L^n)= \sum_{\i \in \I^n} \frac{w_\i(z_\i)}{1-\psi_\i'(z_\i)}$$
where $z_{\i}$ is the unique fixed point of $\psi_\i$ and $w_{\i}(z) := w_{i_n }(z) w_{i_{n-1} }(\phi_{i_n}z)\cdots w_{i_1}( \phi_{i_2 \ldots i_n} z ).$
\end{enumerate}
\end{prop}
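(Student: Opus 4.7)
The plan is to decompose $L$ as a finite sum of weighted composition operators, $L = \sum_{i=1}^k M_{w_i} C_{\psi_i}$, where $M_{w_i}$ denotes multiplication by $w_i$ and $C_{\psi_i} f := f \circ \psi_i$, and then to invoke the theory of such operators on Bergman spaces developed by Bandtlow and Jenkinson in \cite{bj, bj2}. To check that $L$ preserves $A^2(D)$, I would note that since $\overline{\psi_i(D)} \subset D$ for each $i$, the submean value inequality yields a pointwise bound $\sup_{z \in \psi_i(D)} |f(z)| \leq C_i \|f\|_{A^2(D)}$ with $C_i$ depending only on the distance of $\psi_i(D)$ from $\partial D$. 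Combined with $|w_i| \leq \|w_i\|_\infty$ and the holomorphicity of $w_i \cdot (f \circ \psi_i)$, this gives $Lf \in A^2(D)$ with $\|L\| \leq \sum_i C_i \|w_i\|_\infty$.

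For part (1), I would appeal to the quantitative singular value estimates of \cite{bj2} for a single composition operator on $A^2(D)$: if $\overline{\psi(D)} \subset D$ then $s_n(C_\psi) \leq C_0 \exp(-\lambda n)$ with $C_0, \lambda > 0$ determined by the relative geometry of $\psi(D)$ inside $D$. Taking $\lambda > 0$ to work simultaneously for each $\psi_i$, and using $s_n(M_{w_i} C_{\psi_i}) \leq \|w_i\|_\infty s_n(C_{\psi_i})$, one obtains
$$\lambda_n(L) \leq C \left( \sum_{i=1}^k \|w_i\|_\infty \right) \exp(-\lambda n),$$
either via Weyl-type inequalities relating eigenvalues to approximation numbers, or more efficiently by viewing $L$ itself as a single integral operator against the Bergman reproducing kernel and bounding its singular values directly. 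Summability of $(\lambda_n(L))$ then yields the trace-class property.

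For part (2), I would iterate the definition of $L$ to obtain
$$L^n f(z) = \sum_{\i \in \I^n} w_\i(z) f(\psi_\i(z)),$$
with $\psi_\i := \psi_{i_1} \circ \cdots \circ \psi_{i_n}$ and $w_\i$ the telescoping weight in the statement. Because each $\psi_i$ satisfies $\overline{\psi_i(D)} \subset D$, every composition $\psi_\i$ is a strict holomorphic self-contraction of $D$ and therefore admits a unique attracting fixed point $z_\i \in D$ with $|\psi_\i'(z_\i)| < 1$. By additivity of the trace,
$$\tr(L^n) = \sum_{\i \in \I^n} \tr(M_{w_\i} C_{\psi_\i}),$$
so it suffices to establish the single-operator formula $\tr(M_w C_\psi) = w(z_0)/(1 - \psi'(z_0))$ where $z_0$ is the unique fixed point of $\psi$. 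This I would derive by representing $M_w C_\psi$ as an integral operator against the Bergman reproducing kernel $k_D$, so that $\tr(M_w C_\psi) = \int_D w(z)\, k_D(\psi(z), z)\, \textup{d}V(z)$, and then performing a local change of coordinates at $z_0$ to evaluate the integral via a residue calculation; equivalently, one can diagonalise $M_w C_\psi$ in an orthonormal basis adapted to a linearising coordinate near $z_0$ and sum the resulting geometric series.

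The main obstacle is the single-operator trace formula with its residue-type evaluation, since the expansion of $L^n$ and the additivity of the trace used in (2), together with the general composition-operator singular value estimates needed for (1), are already contained in the Bandtlow--Jenkinson toolkit. A secondary subtlety is ensuring uniformity of the constants $C, \lambda$ in the number of maps: the naive Weyl estimate $s_{n_1 + \cdots + n_k}(A_1 + \cdots + A_k) \leq \sum_i s_{n_i}(A_i)$ produces a rate that degrades with $k$, so the uniform bound requires treating $L$ as a single kernel operator rather than a sum of perturbations, following the approach of \cite{bj2}.
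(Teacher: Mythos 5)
Your proposal is correct and follows essentially the same route as the paper: the paper's proof simply cites \cite{bj2} (Theorems 5.9 and 5.13) for the trace-class property and uniform eigenvalue decay, and \cite{bj} (Theorem 4.2) for the fixed-point trace formula, which are precisely the two ingredients your sketch reconstructs (single-kernel treatment of $L$ for uniformity in $k$, and the weighted composition operator trace formula via the reproducing kernel or a linearising coordinate at the fixed point). The only difference is that you outline the internal arguments of those cited results rather than quoting them.
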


\begin{proof}
The first part is a special case of \cite[Theorems 5.9 and 5.13]{bj2}. The second part follows by \cite[Theorem 4.2]{bj}.
\end{proof}

\subsection{Analytic perturbation theory}

We say that a bounded linear operator $L$ on a Banach space has \emph{spectral gap} if $L=\lambda P+N$ where $P$ is a rank one projection (so $P^2=P$ and $\dim(\textnormal{Im}(P))=1$), $N$ is a bounded operator with spectral radius $\rho(N)< |\lambda|$ and $PN=NP=0$. $L$ does not need to be compact in order to have a spectral gap, however if the operator $L$ \emph{is} compact and has a simple leading eigenvalue\footnote{Throughout the paper we say that an eigenvalue is simple if it is \emph{algebraically simple}, that is, the eigenvalue has a one-dimensional generalised eigenspace.} and no other eigenvalues with the same absolute value, it has a spectral gap. 

We can use the standard techniques of perturbation theory \cite{kato} to relate $\int g\textup{d}\mu$ to the spectral properties of an appropriate operator. The following perturbation theorem is presented in a more general form in \cite[Theorem 3.8]{hennion}.

\begin{thm}[Analytic perturbation theorem] \label{apt} Let $\{L_t\}_{t \in \C}$ be a family of bounded linear operators on a Banach space such that $t \mapsto L_t$ is holomorphic and $L_0$ has spectral gap. Then there exists an open neighbourhood $U \subset \mathbb{C}$ of 0 for which $L_t$ has spectral gap for all $t \in U$. Moreover  there exist $\lambda(t), P_t, N_t$ which are holomorphic families on $U$ such that:
\begin{enumerate}
\item[(a)] $L_t=\lambda(t) P_t+N_t$,
\item[(b)] $N_t P_t=P_t N_t=0$
\item[(c)] $P_t$ is a bounded rank one projection and has the form $$P_t= \frac{1}{2\pi i} \int_{\gamma} (s \id -L_t)^{-1} \textup{d}s $$
for some small circle $\gamma$ around $\lambda$ which separates it from the rest of the spectrum of $L_0$,
\item[(d)] $\rho(N_t)< |\lambda(t)|- \epsilon$ for some $\epsilon>0$ which is independent of $t$.
\end{enumerate}
\end{thm}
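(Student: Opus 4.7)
The plan is to derive Theorem \ref{apt} directly from the Dunford--Riesz functional calculus applied to the holomorphic family $t \mapsto L_t$. First I would use the spectral gap of $L_0$ to fix a small positively oriented circle $\gamma \subset \C$ around $\lambda(0)$ that separates $\lambda(0)$ from the rest of the spectrum of $L_0$. On the compact set $\gamma$ the resolvent $(s\id - L_0)^{-1}$ is uniformly bounded, so the identity
$$
(s\id - L_t)^{-1} = \bigl(\id - (s\id - L_0)^{-1}(L_t - L_0)\bigr)^{-1}(s\id - L_0)^{-1}
$$
combined with a Neumann series expansion shows that there is an open neighbourhood $U \subset \C$ of $0$ on which $(s\id - L_t)^{-1}$ exists for every $s \in \gamma$ and depends jointly holomorphically on $(s,t) \in \gamma \times U$. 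Defining
$$
P_t := \frac{1}{2\pi i}\int_\gamma (s\id - L_t)^{-1}\,\textup{d}s,
$$
the resulting family is $U$-holomorphic by differentiation under the contour integral, and the usual double-contour computation invoking the resolvent identity yields $P_t^2 = P_t$ together with $P_t L_t = L_t P_t$.

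Next I would show that $P_t$ has rank one and extract the eigenvalue $\lambda(t)$. By norm-continuity of $t \mapsto P_t$, I can shrink $U$ so that $\norm{P_t - P_0} < 1$; the elementary fact that two bounded projections at norm distance less than one have the same range dimension then forces $\dim \textnormal{Im}(P_t) = 1$. Since $P_t$ commutes with $L_t$, the one-dimensional subspace $\textnormal{Im}(P_t)$ is $L_t$-invariant, so there is a unique scalar $\lambda(t)$ with $L_t P_t = \lambda(t) P_t$. Holomorphy of $\lambda(t)$ follows from the representation $\lambda(t) = \ell(L_t P_t v)/\ell(P_t v)$, valid for any fixed vector $v$ with $P_0 v \neq 0$ and any continuous linear functional $\ell$ with $\ell(P_0 v) \neq 0$ (both nonvanishing conditions persist on a neighbourhood of $0$). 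Setting $N_t := L_t - \lambda(t) P_t$ gives (a); statement (b) is immediate from $L_t P_t = P_t L_t = \lambda(t) P_t$ and $P_t^2 = P_t$; and (c) is just the definition of $P_t$.

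For (d) I would observe that the decomposition $L_t = \lambda(t) P_t + N_t$ with $N_t P_t = P_t N_t = 0$ yields the spectral identity $\sigma(N_t) \setminus \{0\} = \sigma(L_t) \setminus \{\lambda(t)\}$. By construction of $\gamma$ the set $\sigma(L_0) \setminus \{\lambda(0)\}$ lies at positive distance from the closed disk bounded by $\gamma$, so $\rho(N_0) < |\lambda(0)|$ with some margin $2\epsilon > 0$. Upper semicontinuity of the spectrum along the norm-holomorphic family $t \mapsto L_t$, combined with continuity of $\lambda(t)$, lets me shrink $U$ once more to obtain a uniform bound $\rho(N_t) < |\lambda(t)| - \epsilon$ on $U$. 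The hardest step to make rigorous will be the rank-stability argument: one must genuinely verify that norm closeness of $P_t$ to $P_0$ transfers the precise rank-one property, and that the resulting scalar $\lambda(t)$ inherits full holomorphy rather than mere continuity. Both issues are resolved within the Neumann-series/functional-calculus framework above, but they are what turns this perturbation argument from a heuristic into a rigorous proof.
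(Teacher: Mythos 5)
Your argument is correct, but note that the paper itself does not prove Theorem \ref{apt}: the statement is quoted from Hennion--Herv\'e \cite[Theorem 3.8]{hennion} (with \cite{kato} cited for the general technique), so there is no in-paper proof to compare against. What you have written is essentially a self-contained reconstruction of the classical argument underlying those references: the Riesz spectral projection $P_t=\frac{1}{2\pi i}\int_{\gamma}(s\id-L_t)^{-1}\,\textup{d}s$, joint holomorphy of the resolvent in $(s,t)$ via a Neumann series on the compact contour, rank stability of projections with $\norm{P_t-P_0}<1$ (this is a standard lemma, e.g.\ Kato I.4.6/I.6.8, so it is less of an obstacle than you suggest), extraction of $\lambda(t)$ from the one-dimensional $L_t$-invariant range together with the quotient formula $\lambda(t)=\ell(L_tP_tv)/\ell(P_tv)$ for holomorphy, and upper semicontinuity of the spectrum for (d). Two points are worth making explicit to close the argument cleanly: first, since $P_t$ is the spectral projection of $L_t$ associated with the part of $\sigma(L_t)$ enclosed by $\gamma$ and has rank one, that part of the spectrum is exactly $\{\lambda(t)\}$, and $\sigma(N_t)\setminus\{0\}=\sigma(L_t|_{\ker P_t})$ lies outside $\gamma$; you use this implicitly when passing from the spectral identity to the bound on $\rho(N_t)$. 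Second, in (d) the upper semicontinuity step should be phrased as: $\sigma(L_t)$ is contained in a small neighbourhood of $\sigma(L_0)$, the piece inside $\gamma$ is $\{\lambda(t)\}$ by the previous remark, and the remaining piece lies in $\{|z|<|\lambda(0)|-2\epsilon+\delta\}$, which together with continuity of $|\lambda(t)|$ gives the uniform margin $\epsilon$. With those clarifications your proof is complete and follows the same contour-integral/functional-calculus route as the cited sources.
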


\section{Transfer operator}

Let $\Phi$ and $g$ satisfy the hypothesis of theorem \ref{main}. By assumption, there exists an admissable domain $D$ for $\Phi$ such that $g$ has a bounded and holomorphic extension to $D$. Abusing notation slightly, we also denote this extension by $g$. Fix a probability vector $\p$, $s \in \C$ and $f\in A^2(D)$. Define the operator $\l_s$ as
$$\l_sf(z)= \sum_{i=1}^N p_i \exp(sg(\phi_i (z)))f(\phi_i(z)).$$
Then by Proposition \ref{bj}, $\l_s f \in A^2(D)$. 
Observe that the iterates of $\l_s$ are given by
$$\l_s^nf(z)= \sum_{\i \in \I^n} p_{\i} \exp(s(g(\phi_{\i} z)+g(\phi_{\tilde{\sigma} \i}z) + \ldots + g(\phi_{\tilde{\sigma}^{n-1}\i} z)))f(\phi_{\i}z).$$

The following proposition summarises some of the spectral properties of $\l_s$.

\begin{prop} \label{spectral}
$\l_s:A^2(D) \to A^2(D)$ is a trace-class operator with decreasing sequence of eigenvalues $\{\lambda_n(s)\}_{n \in \N}$.  Moreover
\begin{enumerate}
\item There exist constants $C, \lambda>0$ which are independent of $s$ such that the eigenvalues admit the bound
$$\lambda_n(\l_s) \leq C\exp(-\lambda n^2)$$
for any $s \in \C$ with $|s| \leq 1$,
\item  $\lambda_1(0)=1$ is a simple eigenvalue of $\l_0$ of maximum modulus,
\item $\lambda_1(s)$ is analytic in $s$ in a neighbourhood of 0 and
\item $\int g \textup{d} \mu= \frac{\textup{d}}{\textup{d}s}\lambda_1(s)\biggr|_{s=0}$.
\end{enumerate}
\end{prop}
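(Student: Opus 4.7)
The plan is to verify the four claims in turn, using Proposition \ref{bj} for (1), a Hutchinson-style argument for (2), Theorem \ref{apt} for (3), and a direct differentiation of the eigenvalue equation for (4). For (1), I would apply Proposition \ref{bj} with $\psi_i = \phi_i$ and weights $w_i(z) = p_i \exp(sg(\phi_i(z)))$, which are bounded and holomorphic on $D$ uniformly in $s$ on $|s|\leq 1$; this already delivers trace-classness and exponential eigenvalue decay. The sharper rate $\exp(-\lambda n^2)$ reflects the dimension-one setting: for strict holomorphic contractions of a planar domain, the singular values of the associated weighted composition operators on $A^2(D)$ decay at rate $\exp(-\lambda n^2)$ (a quantitative refinement of Proposition \ref{bj} available from the techniques of \cite{bj2}), and Weyl's inequality $|\lambda_n(\l_s)| \leq s_n(\l_s)$ transfers the decay to the eigenvalues. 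Uniformity in $s$ on $|s|\leq 1$ is preserved by the uniform bound on $\|w_i\|_\infty$.

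For (2), $\l_0 \mathbf{1} = 1$ exhibits $1$ as an eigenvalue with eigenfunction $\mathbf{1}$, and $\|\l_0^n\|_\infty \leq 1$ pins the spectral radius at $1$. Algebraic simplicity is obtained by showing any $f\in A^2(D)$ with $\l_0 f = f$ is constant: iterating yields $f(z) = \sum_{\mathbf{i}\in \I^n} p_{\mathbf{i}} f(\phi_{\mathbf{i}}(z))$, the empirical measures $\sum_\mathbf{i} p_\mathbf{i} \delta_{\phi_\mathbf{i}(z)}$ converge weakly to $\mu$ uniformly in $z\in [0,1]$ by Hutchinson's contraction argument, so $f(z) = \int f\, \mathrm{d}\mu$ for all $z \in [0,1]$; the identity principle then forces $f$ to be constant on $D$. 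A standard argument using the power bound $\|\l_0^n\|_\infty \leq 1$ rules out nontrivial Jordan blocks, and the absence of other unimodular eigenvalues follows by a Perron--Frobenius argument leveraging the uniqueness (hence ergodicity) of $\mu$. For (3), the map $s \mapsto \l_s$ is holomorphic, so combining the spectral gap from (2) with Theorem \ref{apt} yields an analytic branch $\lambda_1(s)$ of the leading eigenvalue on a neighbourhood of $0$.

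For (4), let $\psi_s \in A^2(D)$ be the leading eigenfunction produced by Theorem \ref{apt}, normalised so that $\psi_0 = \mathbf{1}$ and depending analytically on $s$. Writing $\dot{\l} := \partial_s \l_s\big|_{s=0}$ and $\dot{\psi} := \partial_s \psi_s\big|_{s=0}$, differentiation of $\l_s \psi_s = \lambda_1(s)\psi_s$ at $s=0$ gives
$$\dot{\l}\mathbf{1} + \l_0 \dot{\psi} = \lambda_1'(0)\mathbf{1} + \dot{\psi},$$
where $\dot{\l}\mathbf{1}(z) = \sum_i p_i g(\phi_i(z))$. Integrating against $\mu$ and using the dual stationarity relation $\int \l_0 h\, \mathrm{d}\mu = \int h\, \mathrm{d}\mu$ (which is just (\ref{SPMeq}) in operator form) cancels the unknown contribution $(\id - \l_0)\dot{\psi}$ and leaves
$$\lambda_1'(0) = \int \sum_i p_i g(\phi_i(z))\, \mathrm{d}\mu(z) = \int g\, \mathrm{d}\mu,$$
the final equality being another application of (\ref{SPMeq}).

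The main obstacle is the $\exp(-\lambda n^2)$ eigenvalue bound in (1): Proposition \ref{bj} as stated yields only linear exponential decay, and recovering the quadratic rate requires the sharper, dimension-one-specific singular-value estimates for Bergman composition operators, together with careful tracking of uniformity in the parameter $s$. Parts (2)--(4) are, by comparison, routine consequences of the uniqueness of $\mu$, the analytic perturbation theorem, and elementary differentiation of the eigenvalue equation.
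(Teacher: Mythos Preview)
Your proposal is correct, and for parts (1) and (3) it matches the paper's approach. (You are in fact more scrupulous than the paper about the $\exp(-\lambda n^2)$ rate in (1): the paper simply cites Proposition~\ref{bj}, which as stated yields only $\exp(-\lambda n)$; fortunately only this linear rate is actually used downstream, in the proof of Proposition~\ref{main prop}, where summing $i_1+\cdots+i_n \geq n(n+1)/2$ supplies the quadratic exponent for the coefficients $b_n$.)

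For (2) and (4), however, you take a genuinely different route. For (2), the paper argues entirely on the complex domain $D$ via the maximum modulus principle: if $\l_0 f = f$ then $\sup_D |f|$ is attained on the compact set $\bigcup_i \overline{\phi_i(D)} \subset D$, forcing $f$ constant; algebraic simplicity is obtained by observing that a generalised eigenvector would satisfy $\l_0^n f = n\mathbf{1} + f$, contradicting the uniform bound $|\l_0^n f(z)| \leq \sup_\Gamma |f|$ on the invariant compact set $\Gamma = \bigcap_n \overline{\bigcup_{\i\in\I^n}\phi_\i(D)}$. Your route via weak convergence of $\sum_{\i} p_{\i}\,\delta_{\phi_{\i} z}$ to $\mu$ on $[0,1]$ followed by the identity theorem also works, and is more dynamical in flavour, though it imports the Hutchinson contraction estimate as an extra ingredient where the paper gets by with complex analysis alone. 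For (4), the paper differentiates the \emph{iterated} relation $\lambda_1(s)^n h_s(z_0) = \l_s^n h_s(z_0)$ at $s=0$, divides by $n$, and sends $n\to\infty$, controlling the remainder via boundedness of $\l_0^n f_0$ on $\Gamma$. Your approach --- differentiate $\l_s\psi_s = \lambda_1(s)\psi_s$ once and integrate against $\mu$, using stationarity (\ref{SPMeq}) to annihilate the $(\id - \l_0)\dot\psi$ term --- is the standard dual-eigenvector trick and is cleaner, avoiding the limiting argument entirely.
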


\begin{proof}
The fact that $\l_s$ is trace-class and the uniform bounds on $\lambda_n(\l_s)$ follow from Proposition \ref{bj}. 

Next we prove (2). Clearly 1 is an eigenvalue of $\l_0$ with eigenfunction $\one$. To see that it is geometrically  simple, suppose that $f \in A^2(D)$ is a fixed point of $\l_0$ and $f \neq 0$. We will show that $f$ must be a constant function. First, observe that
$$|f(z)|=|\l_0 f(z)| \leq \sum_{i=1}^k p_i|f\circ \phi_{i}(z)| \leq \sup_{z' \in \bigcup_{i=1}^k \overline{\phi_{i}(D)}} |f(z')|$$
where the right hand side is finite because $\bigcup_{i=1}^k \overline{\phi_{i}(D)}$ is a compact subset of $D$. Therefore, 
$$\sup_{z \in D}|f(z)| \leq\sup_{z' \in \bigcup_{i=1}^k \overline{\phi_{i}(D)}} |f(z')|=|f(z_0)|$$
for some $z_0 \in \bigcup_{i=1}^k\overline{\phi_{i}(D)} $. By the maximum-modulus principle, $f$ is constant on $D$. Using the same argument, we can establish that 1 is the only eigenvalue of modulus 1.

Therefore it remains to show that $1$ is an algebraically simple eigenvalue. We need to show that $\ker(\l_0-\id)^2$ is one dimensional (so only consists of the constant functions). For a contradiction suppose that there exists $f \in A^2(D)$ for which $(\l_0-\id)f \neq 0$ but $(\l_0-\id)f \in \ker(\l_0-\id)$. So in particular $(\l_0-\id)f = c \one$ for some constant $c$. In particular, $c \neq 0$ since $(\l_0-\id)f \neq 0$ and therefore by replacing $f$ by $c^{-1} f$ we obtain that $(\l_0-\id)f=\one$, that is, $\l_0f= \one +f$. By induction we see that 
\begin{eqnarray}\l_0^n f= n \one+f. \label{ind} \end{eqnarray}

On the other hand, define 
$$\Gamma^n= \overline{ \bigcup_{\i \in \I^n} \phi_{\i}(D)}$$
and define
$\Gamma= \bigcap_{n=1}^{\infty} \Gamma^n$. Since $\Gamma^n$ is a nested sequence of closed subsets of $D$, $\Gamma$ is a compact subset of $D$. For any $z \in \Gamma$, 
\begin{eqnarray}
|\l_0^nf(z)| &=& \left| \sum_{\i \in \I^n} p_{\i}  f(\phi_{\i}(z))\right| \\
&\leq & \sup_{z \in \Gamma} |f(z)|.
\end{eqnarray}
By (\ref{ind}), $|\l_0^n f(z)|=|n+f(z)| \geq n-|f(z)|$ implying that
$$n \leq 2\sup_{z \in \Gamma} |f(z)|$$
which is clearly a contradiction since $f$ is bounded on $\Gamma$.

To see (3), since $\l_0$ is compact, $\lambda_1(0)$ is a simple eigenvalue of maximum modulus and $\{\l_s\}$ is clearly an analytic family of operators in $s$, we can apply the analytic perturbation theorem \ref{apt} to deduce that $\lambda_1(s)$ is analytic in a neighbourhood $U$ of 0.

Finally to prove (4) put
$$\mathcal{P}_s= \frac{1}{2\pi i} \int_{\gamma} (t \id -\l_s)^{-1} \textup{d}t $$
as in (c) of Theorem \ref{apt}. (a)-(c) of Theorem \ref{apt} imply that the image of $\mathcal{P}_s$ is an eigenspace for the eigenvalue $\lambda_1(s)$ and that $h_s= \mathcal{P}_s \one$ is an eigenfunction for the eigenvalue $\lambda_1(s)$. Note that $h_0= \one$. Since $s \mapsto \mathcal{P}_s$ is holomorphic it immediately follows that $s \mapsto h_s$ is also holomorphic. We write $f_0= \frac{\textup{d}}{\textup{d}s} h_s \bigr|_{s=0} \in A^2(D)$.

Fix some $z_0 \in \Gamma \cap (0,1)$. Observe that for each $n>1$ and $s \in U$,
\begin{eqnarray*}
\lambda_1(s)^n h_s(z_0)&=& (\l_s^n h_s)(z_0)= \sum_{\i \in \I^n} p_{\i} \exp(s( g(\phi_{\i}z_0)+ \ldots g(\phi_{\tilde{\sigma}^n\i}z_0))) h_s(\phi_{\i}(z_0)).
\end{eqnarray*}
Denote $S_ng(\phi_{\i} z_0)=g(\phi_{\i}z_0)+ \ldots g(\phi_{\tilde{\sigma}^n\i}z_0)$. Differentiating at $s=0$ we obtain
\begin{eqnarray*}
n\lambda_1'(0) + f_0(z_0)&=& \sum_{\i \in \I^n} p_{\i} S_ng(\phi_{\i} z_0)+ \sum_{\i \in \I^n} p_{\i} f_0(\phi_{\i}(z_0)) \\
&=& \sum_{\i \in \I^n} p_{\i} S_ng(\phi_{\i} z_0)+ \l_0^n f_0(z_0)
\end{eqnarray*}
where we used that $\lambda_1(0)=1$ and $h_0= \one$. Therefore since $|\l_0^nf_0(z_0)-f_0(z_0)| \leq \sup_{z \in \Gamma} |f_0(z)|< \infty$
\begin{eqnarray*}
\lambda_1'(0)&=& \lim_{n \to \infty} \frac{1}{n} \sum_{\i \in \I^n} p_{\i} S_ng(\phi_{\i} z_0) + \lim_{n \to \infty} \frac{1}{n} (\l_0^nf_0(z_0)-f_0(z_0))\\
&=&\lim_{n \to \infty} \sum_{\i \in \I^n} p_{\i} g(\phi_{\i} z_0)= \int g \textup{d}\mu.
\end{eqnarray*}
The fact that $\lim_{n \to \infty} \frac{1}{n} \sum_{\i \in \I^n} p_{\i} S_ng(\phi_{\i} z_0)=\lim_{n \to \infty} \sum_{\i \in \I^n} p_{\i} g(\phi_{\i} z_0)$ follows because $\phi_i$ are uniformly contracting and $g$ has bounded derivative on $[0,1]$, therefore for any $\epsilon, \delta>0$ one can choose $N$ sufficiently large so that for $n \geq N$, and all $k \leq (1-\epsilon)n$,
\begin{eqnarray*}|g(\phi_{\i}z_0)-g( \phi_{\tilde{\sigma}^k\i} z_0)| \leq \delta.\end{eqnarray*}
\end{proof}

\section{Determinants and the algorithm}

Since $\l_s$ is a trace-class operator for each $s \in \C$, we can define its determinant function $\det(\id-z\l_s)$. The following proposition summarises its properties.

\begin{prop}
For all $s \in \C$, $\det(\id-z\l_s)$ is an entire function of $z$ and can be written in the form
$$\det(\id-z\l_s)= \sum_{n=0}^{\infty} b_n(s)z^n$$
for $b_n(s) \in \C$ where $b_0(s)=1$ for all $s$ and for $n\geq 1$ is defined as
\begin{eqnarray}b_n(s)= \sum_{m=1}^n \frac{(-1)^m}{m!} \sum_{\substack{n_1, \ldots, n_m \in \N^m \\ n_1 + \ldots +n_m=n}} \prod_{i=1}^m \frac{\textnormal{tr} \l_s^{n_i}}{n_i}. \label{tc} \end{eqnarray}
Moreover, the trace of $\l_s^n$ is given by
\begin{eqnarray}
\tr(\l_s^n)= \sum_{\i \in \I^n} p_{\i} \frac{\exp(s(g(z_{\i})+g(z_{\sigma \i}) \ldots +g(z_{\sigma^{n-1}\i})))}{1-\phi_{\i}'(z_{\i})}. \label{tf} \end{eqnarray}
\label{tc prop}
\end{prop}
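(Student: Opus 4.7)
The proof splits into two independent claims: the determinant expansion with coefficient formula \eqref{tc}, and the trace formula \eqref{tf}. Both will be essentially corollaries of results already in hand, but each requires a small bookkeeping step.

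For the determinant expansion, I would invoke the general trace-class machinery from Section~3.1. We have already established in Proposition \ref{spectral} that $\l_s$ is trace-class on $A^2(D)$ for every $s\in\C$. Therefore $\det(\id-z\l_s)$ is entire by \eqref{det} and its power series coefficients are given by \eqref{tc1}, which is exactly the stated formula \eqref{tc}. The only thing to observe is that the coefficients $b_n(s)$ depend on $s$ only through $\tr\l_s^{n_i}$, which is manifestly analytic in $s$ (each iterate $\l_s^n$ is a holomorphic family in $s$ since $\exp(s\,\cdot)$ is entire). The constant term $b_0(s)=1$ is immediate from \eqref{det}.

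For the trace formula \eqref{tf}, the plan is to apply Proposition \ref{bj}(2) to the operator $\l_s$. Writing $\l_s f(z)=\sum_{i=1}^N w_i(z) f(\phi_i(z))$ with weights
$$w_i(z):= p_i \exp(s g(\phi_i(z))),$$
we have $\psi_i=\phi_i$, and Proposition \ref{bj}(2) applies because each $w_i$ is bounded and holomorphic on $D$ (since $g\in\mathcal{C}_\Phi$ is bounded holomorphic on $D$ and $\phi_i(D)\subset D$). This yields
$$\tr(\l_s^n)=\sum_{\i\in\I^n}\frac{w_{\i}(z_{\i})}{1-\phi_{\i}'(z_{\i})},$$
where $w_\i(z)=w_{i_n}(z)\,w_{i_{n-1}}(\phi_{i_n}z)\cdots w_{i_1}(\phi_{i_2\ldots i_n}z)$.

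The main bookkeeping step, and the only mildly delicate point, is to identify $w_{\i}(z_{\i})$ with the cyclic sum appearing in \eqref{tf}. Substituting the definition of $w_i$, the exponent becomes
$$s\sum_{k=0}^{n-1} g\bigl(\phi_{i_{n-k}\ldots i_n}(z_{\i})\bigr),$$
with the product of probabilities giving the factor $p_{\i}$. The key identity is that for each $0\leq k\leq n-1$, the point $\phi_{\tilde{\sigma}^k\i}(z_{\i})$ is the fixed point of $\phi_{\sigma^k\i}$: indeed, one checks directly that $\phi_{\sigma^k\i}\circ\phi_{\tilde{\sigma}^k\i}=\phi_{\tilde{\sigma}^k\i}\circ\phi_{\i}$, so the equality $\phi_{\sigma^k\i}(\phi_{\tilde{\sigma}^k\i}(z_{\i}))=\phi_{\tilde{\sigma}^k\i}(z_{\i})$ holds. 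Hence $\phi_{i_{n-k}\ldots i_n}(z_{\i})=z_{\sigma^{n-k}\i}$. As $k$ ranges over $0,\ldots,n-1$, the index $n-k$ ranges over $1,\ldots,n$, and combined with $z_\i$ itself (coming from $\phi_{\i}(z_\i)=z_\i$) this produces exactly the sum $g(z_{\i})+g(z_{\sigma\i})+\cdots+g(z_{\sigma^{n-1}\i})$ in \eqref{tf}. Putting everything together gives the stated expression, completing the proof.
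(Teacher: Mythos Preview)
Your proof is correct and follows essentially the same approach as the paper: the determinant expansion is cited from the general trace-class formula \eqref{tc1}, and the trace formula follows from Proposition~\ref{bj}(2) together with the observation that $\phi_{\tilde\sigma^k\i}(z_\i)=z_{\sigma^k\i}$. There is a harmless off-by-one slip in your final paragraph (one should have $\phi_{i_{n-k}\ldots i_n}(z_\i)=z_{\sigma^{n-k-1}\i}$, since $i_{n-k}\ldots i_n = \tilde\sigma^{n-k-1}\i$), but as $\sigma^n\i=\i$ both index ranges produce the same cyclic sum, so the conclusion is unaffected.
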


\begin{proof}
(\ref{tc}) follows from (\ref{tc1}). To see (\ref{tf}) notice that by Proposition \ref{bj}(2),
$$\tr(\l_s^n)= \sum_{\i \in \I^n} p_{\i} \frac{\exp(s(g(\phi_{\i}(z_{\i}))+g(\phi_{\tilde{\sigma}\i}(z_{\i}))+ \ldots +g(\phi_{\tilde{\sigma}^k\i}(z_{\i}))))}{1-\phi_{\i}'(z_{\i})}.$$ 
Since for any $1 \leq k \leq n$, $\phi_{\tilde{\sigma}^k \i}(z_{\i})$ is a fixed point of $\phi_{\sigma^k \i}$ it follows that $\phi_{\tilde{\sigma}^k \i}(z_{\i})=z_{\sigma^k\i}$ and therefore (\ref{tf}) follows.
\end{proof}

\begin{prop}
Let $\det(\id -z\l_s)= \sum_{n=0}^{\infty} b_n(s)z^n$ as before. Then there exist constants $C, \lambda >0$ such that $|b_n(s)|, |b_n'(0)| \leq C\exp(-\lambda n^2)$ for all $s \in \C$, $|s| \leq 1$. Moreover,
\begin{eqnarray}
\int g \textup{d}\mu= \lambda_1'(0)= \frac{\sum_{n=0}^{\infty} b_n'(0)}{\sum_{n=0}^{\infty} nb_n(0)}. \label{main2}
\end{eqnarray} \label{main prop} \end{prop}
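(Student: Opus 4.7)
The plan is to split the two assertions: the super-exponential bounds on $b_n(s)$ and $b_n'(0)$ come from the eigenvalue estimate of Proposition \ref{spectral}(1) combined with the coefficient inequality (\ref{sing coef}) and Cauchy's estimate, while the identity for $\lambda_1'(0)$ is obtained by viewing $D(z,s):=\det(\id-z\l_s)=\sum_{n=0}^{\infty}b_n(s)z^n$ as a jointly analytic function of $(z,s)$ and implicitly differentiating the relation $D(1/\lambda_1(s),s)=0$ at $(z,s)=(1,0)$.

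\emph{Estimates on $b_n(s)$ and $b_n'(0)$.} Starting from (\ref{sing coef}), $|b_n(s)|\le\sum_{i_1<\cdots<i_n}\prod_{k=1}^n|\lambda_{i_k}(\l_s)|$. Proposition \ref{spectral}(1) supplies the uniform bound $|\lambda_k(\l_s)|\le Ce^{-\lambda k^2}$ for $|s|\le 1$. Splitting each factor as $e^{-\lambda k^2/2}\cdot e^{-\lambda k^2/2}$, one half is controlled via $i_k\ge k$ to give $\prod_k e^{-\lambda i_k^2/2}\le e^{-\lambda\sum_{k=1}^n k^2/2}$, while the other half, summed over all increasing tuples, is dominated by $\bigl(\sum_{i\ge 1}e^{-\lambda i^2/2}\bigr)^n$. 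Since $\sum_{k=1}^n k^2\ge n^3/3$, this yields $|b_n(s)|\le C'\exp(-\lambda' n^2)$ for suitable constants $C',\lambda'>0$ independent of $s$ with $|s|\le 1$. Because $b_n$ is entire in $s$ (each $\tr(\l_s^{n_i})$ is entire in $s$ by (\ref{tf})), Cauchy's estimate applied on the circle $|s|=1$ gives $|b_n'(0)|\le\sup_{|s|=1}|b_n(s)|\le C'\exp(-\lambda' n^2)$.

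\emph{Identity for $\lambda_1'(0)$.} The function $D(z,s)$ is jointly analytic on $\C^2$: each coefficient $b_n$ is entire in $s$, and the uniform bound $|b_n(s)|\le C'\exp(-\lambda' n^2)$ ensures the power series in $z$ converges locally uniformly in $s$. Since $\lambda_1(0)=1$ is an algebraically simple eigenvalue of $\l_0$ by Proposition \ref{spectral}(2), and the zeros of $D(\cdot,0)$ are precisely the reciprocals of the eigenvalues of $\l_0$ counted with algebraic multiplicity, $z=1$ is a simple zero of $D(\cdot,0)$; in particular $\partial_z D(1,0)=\sum_{n=0}^{\infty}n b_n(0)\ne 0$. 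Proposition \ref{spectral}(3) ensures that $s\mapsto 1/\lambda_1(s)$ is holomorphic near $0$ with value $1$ at $s=0$, and by construction $D(1/\lambda_1(s),s)\equiv 0$ in a neighbourhood of the origin. Differentiating this relation at $s=0$ gives
$$-\lambda_1'(0)\,\partial_z D(1,0)+\partial_s D(1,0)=0,$$
from which
$$\lambda_1'(0)=\frac{\partial_s D(1,0)}{\partial_z D(1,0)}=\frac{\sum_{n=0}^{\infty}b_n'(0)}{\sum_{n=0}^{\infty}n b_n(0)}.$$
Together with $\lambda_1'(0)=\int g\,\textup{d}\mu$ from Proposition \ref{spectral}(4), this is exactly (\ref{main2}).

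\emph{Main obstacle.} The only genuine subtlety is legitimising termwise differentiation of $D(z,s)$ in both variables, but the uniform super-exponential bound on $b_n(s)$ makes both $\sum_n n b_n(0) z^{n-1}$ and $\sum_n b_n'(0) z^n$ absolutely convergent on all of $\C$ and justifies the manipulation. Non-vanishing of the denominator in (\ref{main2}) reduces to the algebraic simplicity of the leading eigenvalue, already established in Proposition \ref{spectral}(2).
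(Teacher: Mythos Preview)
Your proof is correct and follows essentially the same route as the paper: the coefficient bound via (\ref{sing coef}) plus the eigenvalue decay of Proposition~\ref{spectral}(1), the Cauchy estimate on $|s|=1$ for $b_n'(0)$, and differentiation of the relation $\det(\id-\lambda_1(s)^{-1}\l_s)=0$ at $s=0$, with non-vanishing of the denominator coming from algebraic simplicity of $\lambda_1(0)$. The only cosmetic differences are that the paper sums the increasing-tuple bound by repeated geometric summation (using the linear-exponent form of the eigenvalue bound) rather than your split-the-exponent trick, and writes out the differentiation directly on the series rather than in chain-rule/implicit-function language.
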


\begin{proof}
For the first part, note that by (\ref{sing coef}) and Proposition \ref{spectral}(1),
\begin{eqnarray*}
|b_n(s)| &\leq& \sum_{i_1< \ldots <i_n} \lambda_{i_1}(\l_s) \cdots \lambda_{i_n}(\l_s) \\
&\leq& C^n \sum_{i_1< \ldots <i_n} \exp(-\lambda(i_1+ \ldots +i_n)) \\
&\leq& \frac{C^n \exp(-\frac{n(n+1)}{2}\lambda)}{\prod_{i=1}^n (1-\exp(-i\lambda))}
\end{eqnarray*}
where the last line follows by repeated geometric summation. The upper bound on $|b_n(s)|$ follows (for new constants $C$ and $\lambda$). By the Cauchy integral formula,
$$b_n'(0)= \frac{1}{2\pi i} \int_{|s|=1} \frac{b_n(s)}{s^2} \textup{d}s$$
therefore
$$|b_n'(0)| \leq  \sup_{|s|=1} |b_n(s)|$$
from which we obtain the bound on $|b_n'(0)|$.

Let $U$ be the neighbourhood of 0 on which $\lambda_1(s)$ is analytic. Observe that since the zeroes of the determinant $\det(\id-z\l_s)$ are the reciprocals of the eigenvalues of $\l_s$,
\begin{eqnarray}\sum_{n=0}^{\infty} b_n(s)\lambda_1(s)^{-n}=0.\label{root} \end{eqnarray} 
Since $|b_n(s)|=O(\exp(-\lambda n^2))$ uniformly on $U$ we can apply the Cauchy integral formula to deduce that the partial sums $\sum_{n=1}^N b_n'(s)-nb_n(s)\lambda_1'(s)$ converge uniformly on compact subsets of $U$ as $N \to \infty$. Therefore we can differentiate (\ref{root}) and take derivatives inside the summation to obtain 
\begin{eqnarray}
0= \frac{\textup{d}}{\textup{d}s} \left(\sum_{n=0}^{\infty} b_n(s) \lambda_1(s)^{-n} \right) \biggr|_{s=0}= \sum_{n=0}^{\infty} b_n^{\prime}(0)-nb_n(0)\lambda_1'(0). \label{ting} \end{eqnarray}
Since $(\lambda_1(0))^{-1}$ is a simple zero of $\det(\id-z \l_0)$, it follows that $\sum_{n=0}^{\infty} nb_n(0) \neq 0$ and so by rearranging (\ref{ting}) we obtain
$$\lambda_1'(0)= \frac{\sum_{n=0}^{\infty} b_n'(0)}{\sum_{n=0}^{\infty} nb_n(0)}$$
which completes the proof.
\end{proof}

We define the $k$th approximation
\begin{eqnarray}
\mu_k(g):= \frac{\sum_{n=0}^{k} b_n'(0)}{\sum_{n=0}^k nb_n(0)}.\label{approx}
\end{eqnarray}
Define
$$t_m:=\tr(\l_0^m)=\sum_{\i \in \I^m} p_{\i} \frac{1}{1-\phi_{\i}'(z_{\i})}$$
and
$$\tau_m:= \frac{\textup{d}}{\textup{d}s} \tr(\l_s^m) \biggr|_{s=0} =\sum_{\i \in \I^m} p_{\i} \frac{g(z_{\i})+g(z_{\sigma \i}) \ldots +g(z_{\sigma^{n-1}\i}) } {1-\phi_{\i}'(z_{\i})}.$$
We are now ready to prove the main theorem.

\vspace{2mm}

\noindent \textit{Proof of Theorem \ref{main}.} 
By (\ref{tc}), (\ref{approx}) and the definitions of $t_n, \tau_n$ it follows that $\mu_k(g)$ is given by theorem \ref{main}. By the bounds on $b_n(0)$ and $b_n'(0)$ given in Proposition \ref{main prop} and (\ref{main2}) we obtain the desired bound on the error $|\int g\textup{d}\mu-\mu_k(g)|$. \qed

\section{Applications} \label{app}
In this section we demonstrate Theorem \ref{main_theorem} by estimating moments and Lyapunov exponents of stationary probability measures and the first Wasserstein distance between stationary probability measures. 

\subsection{Moments of stationary probability measures}

The $n$-th moment of a positive Borel measure $\mu$ on $\R$ is defined by
$$
\gamma_n=\gamma_n[\mu]:=\int_{-\infty}^{\infty} x^n \textup{d}\mu(x), n=0,1,\ldots.
$$

Moments are particularly important in analysis, probability and statistics. In analysis, the classical moment problem \cite{Akhiezer} is connected with characterising the image of the map
\begin{eqnarray*}
\mathcal{S}:\{\mbox{ positive Borel measures }\mu \mbox{ on }\R: \mu(\R\setminus [-R,R])=O(R^{-\infty})\}\to \R^{\N}, & & \mu\mapsto (\gamma_n[\mu])_n ,
\end{eqnarray*}
which is connected to the problem of the extension of positive functionals. In probability and statistics, the method of moments \cite{Diaconis_Moments} is useful for proving limit theorems and estimating distributions of samples. 

Since moments of stationary probability measures can be computed analytically in some special cases (see Lemma \ref{lem_moments}), we can demonstrate the efficiency of the algorithm by using it to approximate moments. 

\begin{cor}\label{app_Moments}
Let $\Phi$ and $\p$ satisfy the assumptions of Theorem \ref{main}. Then, for each fixed $m \in \N$, Theorem \ref{main} provides an algorithm with a super-exponential rate of convergence (\ref{error}) that gives approximations $w_k\in\R^{m+1}$ to $\gamma_{[0,m]}:=(\gamma_0,\gamma_1,\ldots,\gamma_m)$  for $k=1,2,\ldots.$ 

\end{cor}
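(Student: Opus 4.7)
The plan is to apply Theorem \ref{main_theorem} coordinatewise. For each fixed $n\in\{0,1,\ldots,m\}$, set $g_n(x):=x^n$. Since $g_n$ is a polynomial, it is entire and in particular bounded on every bounded domain. Thus if $D\subset\C$ is any admissable domain for $\Phi$ (whose existence is guaranteed by the complex contracting hypothesis), then $g_n$ extends holomorphically and boundedly to $D$, so $g_n\in\mathcal{C}_{\Phi}$. Hence Theorem \ref{main_theorem} applies to each $g_n$ and produces approximations $\mu_k(g_n)$ with
\begin{equation*}
\left| \gamma_n-\mu_k(g_n)\right| \;<\; C_n \exp(-\lambda_n k^2)
\end{equation*}
for some constants $C_n,\lambda_n>0$ independent of $k$.

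I would then simply define the vector-valued approximation
\begin{equation*}
w_k \;:=\; \bigl(\mu_k(g_0),\mu_k(g_1),\ldots,\mu_k(g_m)\bigr)\in\R^{m+1}.
\end{equation*}
Because we are approximating only a fixed finite number $m+1$ of integrals, I can set $C:=\max_{0\le n\le m}C_n$ and $\lambda:=\min_{0\le n\le m}\lambda_n$, both finite and strictly positive, and conclude that
\begin{equation*}
\bignorm{w_k-\gamma_{[0,m]}}_{\infty} \;\le\; C\exp(-\lambda k^2),
\end{equation*}
which is the super-exponential rate (\ref{error}) asserted in the statement. Computationally, each coordinate $\mu_k(g_n)$ is produced by the same scheme as in Theorem \ref{main_theorem}: the numbers $t_j$ are identical across all coordinates and need only be computed once, while the numerators $\tau_j$ must be recomputed for each $g_n$ using the periodic orbit values $g_n(z_{\i})=z_{\i}^n$.

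There is no real obstacle here: the only verification needed is that each monomial $x^n$ belongs to $\mathcal{C}_{\Phi}$, which is immediate from entirety, and that the finite maximum/minimum of the constants $C_n,\lambda_n$ produces a uniform bound, which is obvious since $m$ is fixed. The corollary is therefore a direct specialisation of Theorem \ref{main_theorem} to the family of test functions $\{x^n\}_{n=0}^{m}$.
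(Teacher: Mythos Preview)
Your proposal is correct and matches the paper's intent: the paper states Corollary \ref{app_Moments} without proof, treating it as an immediate consequence of Theorem \ref{main_theorem} applied to the monomials $g_n(x)=x^n$, which is exactly what you have written out. The only point worth noting is that you correctly use the boundedness of the admissable domain $D$ (which follows from $D\subset\mathcal{D}$ with $\mathcal{D}$ bounded) to conclude that each polynomial is bounded on $D$ and hence lies in $\mathcal{C}_{\Phi}$.
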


 In the setting where $\Phi$ is a non-overlapping iterated function system of similarities it is possible to obtain the following analytic formula for the moments.

\begin{lma}\label{lem_moments}
Let $\Phi=(\phi_1,\ldots,\phi_N),$ $\phi_i(x)=\rho_i x +t_i$ with $0< \rho_i<1$ such that $\phi_i(0,1)\cap \phi_j(0,1)=\emptyset$ for every $i\neq j$ and $\p= (p_1,\ldots,p_N)$ a probability vector.

Then $\gamma_0=1$ and for every $n>0$
$$
\gamma_n=\frac{\sum_{i=0}^{n-1} \binom{n}{i} \gamma_i  \sum_{j=1}^N p_j \rho_j^i t_j^{n-i} } {1- \sum_{j=1}^N p_j \rho_j^n }.
$$  
\end{lma}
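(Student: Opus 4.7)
The plan is to derive the recursion directly from the stationarity equation (\ref{SPMeq}), without invoking Theorem \ref{main_theorem} at all, since this formula is exact.

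First I would note that $\gamma_0 = \int 1\, \textup{d}\mu = 1$ because $\mu$ is a probability measure. For $n \geq 1$, I would apply the stationarity identity (\ref{SPMeq}) with the test function $\varphi(x) = x^n$ (which is continuous, so the equation applies). This yields
$$\gamma_n = \sum_{j=1}^N p_j \int (\rho_j x + t_j)^n \, \textup{d}\mu(x).$$
Note that the non-overlapping hypothesis is not actually needed for this step; it is the existence and uniqueness of $\mu$ that matters, guaranteed by Hutchinson's theorem since each $\phi_i$ is a contraction.

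Next I would expand the integrand using the binomial theorem, $(\rho_j x + t_j)^n = \sum_{i=0}^n \binom{n}{i} \rho_j^i t_j^{n-i} x^i$, and interchange the finite sum with the integral. This gives
$$\gamma_n = \sum_{j=1}^N p_j \sum_{i=0}^n \binom{n}{i} \rho_j^i t_j^{n-i} \gamma_i.$$
Isolating the $i=n$ contribution on the right-hand side produces
$$\gamma_n = \gamma_n \sum_{j=1}^N p_j \rho_j^n + \sum_{i=0}^{n-1} \binom{n}{i} \gamma_i \sum_{j=1}^N p_j \rho_j^i t_j^{n-i}.$$

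Finally I would solve for $\gamma_n$. The coefficient $1 - \sum_j p_j \rho_j^n$ is strictly positive since each $0 < \rho_j < 1$ and $\p$ is a probability vector, so dividing through yields the stated formula. There is essentially no obstacle here; the only point to be careful about is justifying the interchange of the finite sum and integral (immediate, as the sum is finite) and the positivity of the denominator. The moments $\gamma_i$ for $i < n$ are well-defined inductively starting from $\gamma_0 = 1$, so the recursion determines every $\gamma_n$ uniquely.
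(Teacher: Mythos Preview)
Your proof is correct and follows exactly the route indicated in the paper, which simply states that the formula follows directly from \eqref{SPMeq} and the definition of $\Phi$. You have filled in the details of that computation (apply stationarity with $\varphi(x)=x^n$, expand by the binomial theorem, separate the $i=n$ term, and divide), and your remark that the non-overlapping hypothesis is not actually used is also accurate.
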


\begin{proof}
Directly from \eqref{SPMeq} and definition of $\Phi.$
\end{proof}

We can use Theorem \ref{main_theorem} to compute approximate values for the moments and compare these with the exact values given by Lemma \ref{lem_moments}.

\begin{exam}
 Let $\Phi=\{\frac{1}{3}x, \frac{1}{3}x+\frac{2}{3}\}$ and $p=\left( \frac{1}{3},\frac{2}{3} \right).$ In table \ref{tab:table1} we compare the approximate values for the moments with the exact values given by the formula.

{\tiny
\begin{table}[h!]
  \begin{center}
    \caption{Approximate values for the moments.}
    \label{tab:table1}
    \begin{tabular}{l | c | c|r} 
      \textbf{Order} & \textbf{Approx. moment value ($k=14$)} & \textbf{Actual moment value} &  \textbf{Error} \\
      $n$ & $$ & $\gamma_n$ &  \\
      \hline
      0 & 1 & 1 & 0\\\
      1 &  $0.\bar{6}$ & 2/3 & 0 \\\
      2 &  $0.555555555555555555555555555555555555555555555554956$ & $5/9$ & $<10^{-48}$\\\
      3 & $0.495726495726495726495726495726495726495726495726296$  & $58/117$ & $<10^{-48}$ \\\
      4 &  $0.455270655270655270655270655270655270655270655272966$ &$ 799/1755$ & $<10^{-47}$\\\
      5 &  $0.424681939833454984970136485288000439515591030742856$ &   $ 54110/127413$  & $<10^{-48}$ \\\
      6 &  $0.400239922384444528966673488818010962533107055240313$ &  $ 662945/1656369$  & $<10^{-46}$\\\
      7 &  $0.380103465368104155158312752269065347061865131565893$ &  $ 2064430846/5431233951$  & $<10^{-46}$ \\\
      8 &  $0.363174113035119080714419595189042981452860757191787$ & $ 1213077397297/3340208879865$  & $<10^{-46}$ \\\ 
      9 &  $0.348713510641572814140979346643070945142831570520821$ &  $ 764170684622650/2191399705783431$  & $<10^{-46}$\\\
     10 & $0.336192206948823127238914429058461576988475565523927$ &  $16313445679660723325/48524163685162512633$  & $<10^{-46}$
    \end{tabular}
  \end{center}
\end{table}
}
\end{exam}

On the other hand, when $\Phi$ is made up of non-affine contractions, there is no analytic formula available. In the following example we approximate the moments of a stationary probability measure in this setting.

\begin{exam}
Let $\Phi=\{\frac{1}{x+2},\frac{1}{x+4}\}$ and $\p= \left(\frac{1}{2} , \frac{1}{2} \right).$ We can compute the approximate values for the moments. The results are in Table \ref{tab:table2}, where all of the digits are stable.\footnote{We say that a digit is stable if, from empirical observations, it appears to have converged to a stable value}
{\tiny
\begin{table}[h!]
  \begin{center}
    \caption{Approximate values for the moments.}
    \label{tab:table2}
    \begin{tabular}{l| l} 
      \textbf{Order} & \textbf{Approximate moment value ($k=13$)} \\
      $n$ & $$  \\
      \hline
      0 & $1$ \\
      1 & $0.330469717526485534080138479518406828981534429410127592033533774023242108\ldots$ \\
      2 & $0.1192803776960544798961200249581823359145663180489550186633549589883397537\ldots$ \\
      3 & $0.0461208401857915310881276274089274103313021776737494744720364563940134891\ldots$ \\
      4 & $0.0186956679319404288549585313723378148471406387052756144239642197703097910\ldots$ \\
      5 & $0.0078078479770635609245553780159130351213591283122475360490644426811585893\ldots$ \\
      6 & $0.0033201105732319037686859365646767147485917239274989027585387616786231639\ldots$ \\
      7 & $0.00142718211837850365241828109336276663400252833572475441921650238220944521\ldots$ \\
      8 & $0.000617598307785531412407227175919292282332872096283836068810742529448255746\ldots$ \\
      9 & $0.000268421862695922651075727295017088906810341162763633320631290756501684091\ldots$ \\
     10 & $0.000117017198360695628842316148053569471108768123919492595293008649132277667\ldots$ 
    \end{tabular}
  \end{center}
\end{table}
}
\end{exam}

\subsection{First Wasserstein distance between stationary probability measures} \label{wsection}

Fraser \cite{Fraser}  initiated the study of the interaction between fractal geometry and optimal transportation  by proposing the problem of computing the Wasserstein distances between stationary probability measures. The Wasserstein distance is a metrization of the weak-$*$ topology of the space of probability Borel measures on a Polish space. For $m\in [1,\infty),$ the Wasserstein distance of order $m$ between two probability measures $\mu$ and $\nu$ is defined by
$$
W_{m}(\mu,\nu)=\inf \left\{ \left[\mathbb{E}d(X,Y)^m\right]^{\frac{1}{m}}: \mbox{law}(X)=\mu, \mbox{law}(Y)= \nu  \right\}.
$$
 In general, under no assumption on the iterated functions system and associated stationary probability measure, there is no hope of finding an analytic formula. Under some very restrictive assumptions it has been proved in \cite{Fraser, Mark_Pollicott_Italo_Cipriano} that $W_1(\mu,\nu)$ can be obtained explicitly. Using results announced in \cite{ItaloWPreprint} and Theorem \ref{main} we can provide an estimate of $W_1(\mu,\nu)$ when $d=1$ and $\mu,\nu$ are stationary probability measures. The following result was obtained in \cite{ItaloWPreprint}. 

\begin{thm}\label{teo_pre-print}
Let $\Phi=\{\phi_i\}_{i\in\I}$ be a non-overlapping iterated function system of differentiable Lipschitz contractions on the unit interval.  Additionally, suppose that $\frac{d\phi_i}{dx}>0$ for each $i \in \I$ and that $(\p,\mathbf{q})$ is a pair of probability vectors with the property that $$\sum_{j=1}^i (p_j - q_j)$$  does not change sign for $i=1,\ldots,N$. Then 

$$
W_1\left(\mu^{(\Phi,\p)},\mu^{(\Phi,\q)} \right)= \left| \int x \textup{d} \mu^{(\Phi,\p)} - \int x \textup{d} \mu^{(\Phi,\q)} \right |. 
$$

\end{thm}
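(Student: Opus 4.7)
My plan is to prove the equality via two matching inequalities. The direction $W_1(\mu,\nu) \geq \left|\int x\,\textup{d}\mu - \int x\,\textup{d}\nu\right|$ is immediate from the Kantorovich--Rubinstein duality $W_1(\mu,\nu) = \sup\{|\int f\,\textup{d}\mu - \int f\,\textup{d}\nu| : \mathrm{Lip}(f) \leq 1\}$, tested on the $1$-Lipschitz function $f(x) = x$. The real work is in the reverse direction, which I would handle by exhibiting an explicit coupling that realises the right-hand side.

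The hypothesis that $\sum_{j=1}^{i}(p_j - q_j)$ does not change sign in $i$ is precisely first-order stochastic dominance between $\p$ and $\q$ as probability vectors on $\I$. After possibly swapping $\p$ and $\q$ I may assume $\sum_{j=1}^i p_j \geq \sum_{j=1}^i q_j$ for every $i$, i.e.\ $\p \leq_{\mathrm{st}} \q$. By Strassen's theorem there is then a coupling $(I,J)$ of $\p$ and $\q$ with $I \leq J$ almost surely. Taking i.i.d.\ copies $(I_n, J_n)_{n \geq 1}$ and defining $X = \Pi(I_1 I_2 \cdots)$ and $Y = \Pi(J_1 J_2 \cdots)$, Hutchinson's construction of the stationary measure yields $X \sim \mu^{(\Phi,\p)}$ and $Y \sim \mu^{(\Phi,\q)}$.

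The key step is verifying $X \leq Y$ almost surely. Since the $\phi_i$ are increasing and non-overlapping, after relabelling from left to right we have $\phi_i(1) \leq \phi_{i+1}(0)$ for each $i < N$, and hence whenever $i < j$ and $z, w \in [0,1]$,
\[\phi_i(z) \leq \phi_i(1) \leq \phi_j(0) \leq \phi_j(w).\]
A short induction on $n$ combining this with the monotonicity of each $\phi_k$ gives $\phi_{I_1}\circ\cdots\circ\phi_{I_n}(0) \leq \phi_{J_1}\circ\cdots\circ\phi_{J_n}(0)$ almost surely, and passing to the limit yields $X \leq Y$ a.s. Therefore
\[W_1\!\left(\mu^{(\Phi,\p)}, \mu^{(\Phi,\q)}\right) \leq \mathbb{E}|X-Y| = \mathbb{E}[Y] - \mathbb{E}[X] = \int x\,\textup{d}\mu^{(\Phi,\q)} - \int x\,\textup{d}\mu^{(\Phi,\p)},\]
which (being non-negative under our sign convention) equals $\left|\int x\,\textup{d}\mu^{(\Phi,\p)} - \int x\,\textup{d}\mu^{(\Phi,\q)}\right|$ and matches the lower bound.

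The main obstacle is the pathwise monotonicity step: extracting stochastic dominance from the partial-sum hypothesis and then propagating pointwise monotonicity through the infinite composition $\phi_{I_1}\circ\phi_{I_2}\circ\cdots$. This is where the assumptions $\phi_i' > 0$ and non-overlapping are used in an essential way---without either of them the maps $\phi_i$ cannot be consistently ordered on $[0,1]$, and the monotone coupling argument collapses.
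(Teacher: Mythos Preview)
The paper does not prove this theorem; it simply quotes the result from \cite{ItaloWPreprint} and uses it as input for the approximation algorithm. So there is no ``paper's own proof'' to compare against. Your argument is a correct self-contained proof, and the strategy---Kantorovich--Rubinstein duality for the lower bound, and an explicit monotone coupling built from Strassen's theorem together with the order structure of the IFS for the matching upper bound---is the natural one. The backward induction showing $\phi_{I_1}\circ\cdots\circ\phi_{I_n}(0)\le \phi_{J_1}\circ\cdots\circ\phi_{J_n}(0)$ from $I_k\le J_k$ is exactly the place where both hypotheses $\phi_i'>0$ and non-overlapping are consumed, as you note.

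One point deserves care. Your phrase ``after relabelling from left to right'' hides a subtlety: the partial-sum hypothesis $\sum_{j\le i}(p_j-q_j)$ of fixed sign is \emph{not} invariant under permutations of $\I$, so you cannot freely relabel and still invoke it. The hypothesis only encodes stochastic dominance on $\I$ when the indices $1,\ldots,N$ already enumerate the images $\phi_i([0,1])$ in their natural left-to-right order on $[0,1]$; this is the convention implicit in \cite{ItaloWPreprint}, and without it the statement can fail for $N\ge 3$. Your proof is correct once this convention is in force, but the ``relabelling'' is not a step of the argument so much as an acknowledgement that the theorem is stated under this ordering convention. It would be cleaner to say so explicitly at the outset.
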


Using the above result and Theorem \ref{main_theorem}, we obtain the following.

\begin{cor}\label{app_Wasserstein_dist}
Let $\Phi$ and $\p,\q$ satisfy the hypothesis of Theorem \ref{teo_pre-print} and Theorem \ref{main}. Then Theorem \ref{main} provides an algorithm that gives approximations $w_k$  $(k=1,2,\ldots)$ to $W_1(\mu^{(\Phi,\p)},\mu^{(\Phi,\q)})$ and which converges at a super-exponential rate (\ref{error}).
\end{cor}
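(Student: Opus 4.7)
The plan is to combine Theorem \ref{teo_pre-print} with Theorem \ref{main_theorem} applied to the particularly simple test function $g(x)=x$. Under the hypotheses of Theorem \ref{teo_pre-print}, we have the explicit identity
$$W_1\!\left(\mu^{(\Phi,\p)},\mu^{(\Phi,\q)}\right)=\left|\int x\,\textup{d}\mu^{(\Phi,\p)}-\int x\,\textup{d}\mu^{(\Phi,\q)}\right|,$$
so it suffices to produce super-exponentially convergent approximations to each of the two integrals on the right-hand side.

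First I would verify that the identity function $g(x)=x$ lies in the class $\mathcal{C}_{\Phi}$. Since $\Phi$ is complex contracting, by the remark after the definition of $\mathcal{C}_{\Phi}$ every sufficiently small Euclidean $\epsilon$-neighbourhood $D$ of $[0,1]$ is an admissable domain. The map $z\mapsto z$ is entire and bounded on any such bounded $D$, so $g\in\mathcal{C}_{\Phi}$. Therefore Theorem \ref{main_theorem} applies with this $g$ to both probability vectors $\p$ and $\q$, producing sequences of approximations $\mu_k^{(\p)}(g)$ and $\mu_k^{(\q)}(g)$ and constants $C_{\p},\lambda_{\p},C_{\q},\lambda_{\q}>0$ with
$$\left|\int x\,\textup{d}\mu^{(\Phi,\p)}-\mu_k^{(\p)}(g)\right|<C_{\p}\exp(-\lambda_{\p}k^2),\qquad \left|\int x\,\textup{d}\mu^{(\Phi,\q)}-\mu_k^{(\q)}(g)\right|<C_{\q}\exp(-\lambda_{\q}k^2).$$

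Next I would define the approximation
$$w_k:=\left|\mu_k^{(\p)}(g)-\mu_k^{(\q)}(g)\right|,$$
which can clearly be computed from the formulas in Theorem \ref{main_theorem} applied to $(\Phi,\p)$ and $(\Phi,\q)$ separately (the quantities $t_m$ differ because of the probability weights $p_{\i}$ versus $q_{\i}$, and similarly for $\tau_m$). Using the reverse triangle inequality together with Theorem \ref{teo_pre-print}, we obtain
$$\left|W_1\!\left(\mu^{(\Phi,\p)},\mu^{(\Phi,\q)}\right)-w_k\right|\leq C_{\p}\exp(-\lambda_{\p}k^2)+C_{\q}\exp(-\lambda_{\q}k^2)<C\exp(-\lambda k^2)$$
for $C:=C_{\p}+C_{\q}$ and $\lambda:=\min(\lambda_{\p},\lambda_{\q})$, which gives the desired super-exponential rate of the form (\ref{error}).

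There is no real obstacle here: the work is entirely front-loaded into Theorems \ref{teo_pre-print} and \ref{main_theorem}, and the only things to check are that $x\in\mathcal{C}_{\Phi}$ (immediate) and that two super-exponential bounds combine into a single one (immediate via the triangle inequality and a min/max on the constants).
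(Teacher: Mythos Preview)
Your proposal is correct and is precisely the argument the paper has in mind: the corollary is stated in the paper without proof, as an immediate consequence of combining Theorem \ref{teo_pre-print} with Theorem \ref{main_theorem} applied to $g(x)=x$ for each of the two probability vectors. The only details you supply beyond the paper's one-line justification are the (trivial) verification that $x\in\mathcal{C}_{\Phi}$ and the triangle-inequality combination of the two error bounds, both of which are exactly what is needed.
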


Let $\Phi, \p, \q$ satisfy the assumptions of Corollary \ref{app_Wasserstein_dist} and additionally suppose that $\phi_i(x)=\rho_i x +t_i$ with $0< \rho_i<1$ for each $i=1, \ldots N$. In \cite{ItaloWPreprint} it was proved that
\begin{equation}\label{analytic_Wasserstein_dist}
W_1(\mu^{(\Phi,p)},\mu^{(\Phi,q)})=\left| \frac{\sum_{i=1}^N p_i t_i }{1-\sum_{i=1}^N p_i \rho_i }-  \frac{\sum_{i=1}^N q_i t_i }{1-\sum_{i=1}^N q_i \rho_i }  \right|.
\end{equation}

We can compare the approximate values for the Wasserstein distances given by Theorem \ref{app_Wasserstein_dist} with the exact value given by the equation \eqref{analytic_Wasserstein_dist}.

\begin{exam}
 Let $\Phi=\left\{\frac{1}{3}x, \frac{1}{2}x+\frac{1}{2}\right\}$ and $p=\left( \frac{1}{3},\frac{2}{3} \right),$  $q=\left( \frac{3}{4},\frac{1}{4} \right).$ We compare the approximate value of the Wasserstein distance $w_k$ for $k=8,9,\ldots, 16,$ with the exact value $W_1\left(\mu^{(\Phi,\p)},\mu^{(\Phi,\q)} \right)=\frac{2}{5}.$ The results are in table \ref{tab:table3}.

{\tiny
\begin{table}[h!]
  \begin{center}
    \caption{Approximate values for the Wasserstein distances.}
    \label{tab:table3}
    \begin{tabular}{l  | c|r} 
      \textbf{Iteration} & \textbf{Approx. Wasserstein dsitance} & \textbf{Error} \\
      $k$ & $w_k$ &    \\
      \hline
     8 &  $0.3999999999583169972032649579562794534915$ & $<10^{-11}$\\\ 
     9 &    $0.400000000000063388018611127635519504188$  & $<10^{-13}$\\\
     10 &   $0.3999999999999999530973195805847764869926$  & $<10^{-16}$ \\\
     11 &  $0.4000000000000000000170204375320820851671$  & $<10^{-19}$ \\\ 
     12 &   $0.3999999999999999999999969551351875258863$  & $<10^{-23}$\\\
     13 &  $0.4000000000000000000000000002694598038564$ & $<10^{-27}$\\\
     14 &  $0.3999999999999999999999999999999881752292$ & $<10^{-31}$\\\
     15 &  $0.4000000000000000000000000000000000001067$ & $<10^{-36}$\\\
       16 &  $0.3999999999999999999999999999999999999999972$ & $<10^{-41}$         
    \end{tabular}
  \end{center}
\end{table}
}
\end{exam}

On the other hand, when the contractions in $\Phi$ are non-affine, an analytic formula is not available. 

\begin{exam}
Let $\Phi=\left\{ \frac{\sin(\pi x /4)}{6}+\frac{1}{4},\frac{\sin(\pi x /4)}{3}+\frac{2}{3}\right\}$ and $\p=\left( \frac{1}{7},\frac{6}{7} \right),$  $\q=\left( \frac{1}{2},\frac{1}{2} \right).$ We can compute the approximate values for the Wasserstein distance between the stationary probability measures associated to $(\Phi,\p)$ and $(\Phi,\q).$ The results are in Table \ref{tab:table4}. 

{\tiny
\begin{table}[h!]
  \begin{center}
    \caption{Approximate values for the Wasserstein distances.}
    \label{tab:table4}
    \begin{tabular}{l  | l} 
      \textbf{Iter.} & \textbf{Approx. Wasserstein distance} \\
      $k$ & $w_k$     \\
      \hline
     8 &  $0.2210457542228009986686646214111284400538314616500475049250670839710830921157028 $ \\\ 
     9 &    $0.2210457542228009986686646648222322435005592621138097135184119027209843678362625 $  \\\
     10 &  $0.2210457542228009986686646648222083279218347168575015785464317336794134748041328 $ \\\
     11 &  $0.2210457542228009986686646648222083279244322328476185036762204108148528502575738 $  \\\ 
     12 &  $0.2210457542228009986686646648222083279244322327918382319334332352348209033215652 $ \\\
     13 &  $0.2210457542228009986686646648222083279244322327918382321725464968058605768813675 $ \\\
     14 &  $0.2210457542228009986686646648222083279244322327918382321725464966008628827356344 $ \\\
     15 &  $0.2210457542228009986686646648222083279244322327918382321725464966008628827708159 $ \\\
    \end{tabular}
  \end{center}
\end{table}
}

\end{exam}

\subsection{Lyapunov exponents} \label{lyap}

Let $\Phi$ be an iterated function system and $\mu$ be the stationary probability measure associated to $(\Phi,\p).$ The Lyapunov exponent of $\Phi$ with respect to  $\mu$ can be defined by 
$$ 
\chi_{\mu}:=- \int \sum_{i=1}^N p_i \log |\phi_i'(x)|  \textup{d}\mu(x).
$$
The Lyapunov exponent of $\mu$ describes the typical rate at which distances are contracted under the action of the maps in the iterated function system, from the point of view of the measure $\mu$. Therefore, Lyapunov exponents play an important role in fractal geometry; for example the Hausdorff dimension of $\mu$ is given in terms of its Lyapunov exponent, which in turn can provide bounds or even precise values for the Hausdorff dimension of the attractor $\Lambda$, see \cite{Falconer}. When $\Phi$ is non-overlapping, the Lyapunov exponent of $\mu$ has an equivalent interpretation as the $\mu$-typical rate of expansion of the dynamical system whose branches are given by the inverse images of the maps in $\Phi$, and therefore Lyapunov exponents also play an important role in ergodic theory, particularly in thermodynamic formalism. We can use Theorem \ref{main} to approximate the Lyapunov exponent of $\Phi$ with respect to $\mu$. 

\begin{cor}

Let $\p=(p_1, \ldots, p_N)$ a probability vector and let $\Phi=\{\phi_i\}_{i \in \I}$ be a complex contracting iterated function system on the unit interval such that:
\begin{equation}
\max_{i\in \I} \sup_{x \in \phi_i([0,1])}- \log | \phi '_i(x) | \leq M<\infty. \label{control}
\end{equation}
Then Theorem \ref{main} provides an algorithm that gives approximations $w_k$  $(k=1,2,\ldots)$ to the  Lyapunov exponent of $\Phi$ with respect to $\mu$ which converges at the super-exponential rate (\ref{error}).
\end{cor}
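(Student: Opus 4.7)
My plan is to apply Theorem \ref{main_theorem} to the function
\[
g(x) := -\sum_{i=1}^N p_i \log|\phi_i'(x)|,
\]
which by the very definition of the Lyapunov exponent satisfies $\chi_\mu = \int_0^1 g \, \mathrm{d}\mu$. Provided $g$ belongs to $\mathcal{C}_\Phi$, the approximations $w_k := \mu_k(g)$ produced by Theorem \ref{main_theorem} automatically converge to $\chi_\mu$ at the super-exponential rate (\ref{error}), and the corollary follows.

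The whole content of the proof is therefore to verify that $g \in \mathcal{C}_\Phi$, i.e.\ that $g$ extends to a bounded holomorphic function on some admissible domain $D$. Since $\Phi$ is complex contracting, each $\phi_i$ extends holomorphically to $\mathcal{D}$ with $|\phi_i'| < 1$, so $\phi_i'$ is itself bounded and holomorphic on $\mathcal{D}$. Condition (\ref{control}) supplies the lower bound $|\phi_i'(x)| \geq e^{-M} > 0$ for $x \in \phi_i([0,1])$; combined with continuity of $\phi_i'$ and compactness of $\phi_i([0,1])$, this extends to a complex open neighborhood $U_i \supset \phi_i([0,1])$ on which $|\phi_i'| \geq c > 0$. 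By Remark 2.5 of \cite{bj} I may take as admissible domain $D$ a Euclidean $\epsilon$-neighborhood of $[0,1]$ with $\epsilon$ small enough that $\overline{\phi_i(D)} \subset U_i$ for every $i$; then on each $\phi_i(D)$ a holomorphic branch of $\log \phi_i'$ exists and is bounded.

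To upgrade these partial holomorphic extensions into a genuine element of $\mathcal{C}_\Phi$, I appeal to the piecewise-analytic observation following Theorem \ref{main_theorem}, combined with stationarity. Writing
\[
\int_0^1 g \, \mathrm{d}\mu \;=\; \sum_{\mathbf{i} \in \I^K} p_{\mathbf{i}} \int_0^1 g \circ \phi_{\mathbf{i}} \, \mathrm{d}\mu
\]
for sufficiently large $K$, each composed integrand evaluates the logarithms $\log|\phi_i'|$ at points of $\phi_{\mathbf{i}}(D) \subset \phi_{i_1}(D) \subset U_{i_1}$; after rearranging via the cocycle identity $\log|\phi_{\mathbf{i}}'(x)| = \sum_{k=1}^K \log|\phi_{i_k}'(\phi_{i_{k+1}\cdots i_K}(x))|$ so that each logarithm encountered evaluates $\phi_{i_k}'$ only on the controlled region $U_{i_k}$, each summand belongs to $\mathcal{C}_\Phi$. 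Theorem \ref{main_theorem} applied to each summand then produces super-exponentially convergent approximations which, combined with the weights $p_{\mathbf{i}}$, yield the desired $w_k$.

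The principal obstacle is this last step: the cocycle rearrangement must be done so that every logarithm in the sum evaluates $\phi_i'$ on $\phi_i([0,1])$ rather than on some other $\phi_j([0,1])$, because condition (\ref{control}) provides control only on the diagonal. The freedom to take $D$ arbitrarily small (via Remark 2.5 of \cite{bj}), together with the finiteness of $\I^K$, permits a single uniform choice of admissible $D$ on which the entire reduction lives. Once that bookkeeping is in place, invoking Theorem \ref{main_theorem} is mechanical.
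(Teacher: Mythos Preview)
Your overall strategy coincides with the paper's: apply Theorem~\ref{main} to $g(x)=-\sum_{i}p_i\log|\phi_i'(x)|$, whose $\mu$-integral is $\chi_\mu$. The paper's proof is a single line asserting that this $g$ lies in $\mathcal C_\Phi$ by (\ref{control}) together with complex contracting; it does not invoke the piecewise-analytic device or any cocycle manipulation at all.

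Where your write-up goes beyond the paper it contains a genuine gap. You correctly notice that (\ref{control}) only bounds $|\phi_i'|$ away from zero on $\phi_i([0,1])$, so that membership of $g$ in $\mathcal C_\Phi$ is not immediate. But your proposed repair via the cocycle identity conflates two different objects. The identity
\[
\log|\phi_{\mathbf i}'(x)|=\sum_{k=1}^{K}\log\bigl|\phi_{i_k}'\bigl(\phi_{i_{k+1}\cdots i_K}(x)\bigr)\bigr|
\]
concerns the derivative of the \emph{composition} $\phi_{\mathbf i}$; it says nothing about
\[
g\circ\phi_{\mathbf i}(x)=-\sum_{j\in\I}p_j\log\bigl|\phi_j'(\phi_{\mathbf i}(x))\bigr|,
\]
which still evaluates \emph{every} $\phi_j'$ at the single point $\phi_{\mathbf i}(x)\in\phi_{i_1}([0,1])$. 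Condition (\ref{control}) gives you no lower bound on $|\phi_j'|$ over $\phi_{i_1}([0,1])$ when $j\neq i_1$, and no amount of rearranging changes that. (Even inside the cocycle identity itself your claim fails: the argument of $\phi_{i_k}'$ is $\phi_{i_{k+1}\cdots i_K}(x)\in\phi_{i_{k+1}}([0,1])$, not $\phi_{i_k}([0,1])$, so it does not land in $U_{i_k}$ either.)

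The paper simply sidesteps this by reading (\ref{control}) as ensuring that each $\phi_i'$ is bounded away from zero on $[0,1]$, which is what is genuinely needed for $\log\phi_i'$ to admit a bounded holomorphic branch on a small admissible $D$. If you want a rigorous argument, that is the hypothesis to use; the piecewise/cocycle route does not rescue the weaker reading.
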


\begin{proof} 
Follows by applying Theorem \ref{main} to the function $g(x)= - \sum_{i=1}^N p_i \log |\phi_i'(x)|$ which belongs to $\mathcal{C}_{\Phi}$ by (\ref{control}) and the fact that $\Phi$ is complex contracting.
\end{proof}

\begin{exam}
Let $\Phi=\{\frac{\sin(\pi x /4)}{6}+\frac{1}{4},\frac{\sin(\pi x /4)}{3}+\frac{2}{3}\}$ and $\p=\left( \frac{1}{3},\frac{2}{3} \right).$ We compute the approximate values to the Lyapunov exponent of $\Phi$ with respect to  $\mu$. The results are in Table \ref{tab:table6}.


{\tiny
\begin{table}[h!]
  \begin{center}
    \caption{Approximate values for Lyapunov exponents.}
    \label{tab:table6}
    \begin{tabular}{l  | l} 
      \textbf{Iter.} & \textbf{Approx. Lyapunov exponent} \\
      $k$ & $w_k$     \\
      \hline
     8 &    $1.73672081473731987719335668233174352405940161192454820335830004878102455942170283350510993376437455813744965215$ \\\ 
     9 &    $1.73672081473731987719335669096051704652553164845149915406190548269505610944642757530134017955544773131075328248$  \\\
     10 &  $1.73672081473731987719335669096051377335988973317409087754512488812667028438133069194736931707026244323079276022$ \\\
     11 &  $1.73672081473731987719335669096051377336020590601023577825778088227884147013275372835954447415750580402943072648$  \\\ 
     12 &  $1.73672081473731987719335669096051377336020590600637607989863098312523352089178183012079770679720814305305511767$ \\\
    13 &  $1.73672081473731987719335669096051377336020590600637607991887362479762663771464637789116844202618748250354445394$ \\\
     14 &  $1.73672081473731987719335669096051377336020590600637607991887362479193249845331374280452940254564000080656340808$ \\\
    15 &  $1.73672081473731987719335669096051377336020590600637607991887362479193249845555716884110153252308930493603697019$ \\\
      16 &  $1.73672081473731987719335669096051377336020590600637607991887362479193249845555716884109104549736936505689675419$ \\\
      17& $1.73672081473731987719335669096051377336020590600637607991887362479193249845555716884109104549736965148549578668$ \\\
      18& $1.73672081473731987719335669096051377336020590600637607991887362479193249845555716884109104549736965148549578625$ \\\
    \end{tabular}
  \end{center}
\end{table}
}
\end{exam}

\section{Further remarks}

\subsection{Implementation}

The algorithm was implemented in Python using the free library mpmath \footnote{Python library for real and complex floating-point arithmetic with arbitrary precision.}. To estimate the fixed points $z_{\i}$ with arbitrary precision the command 
\begin{verbatim} mpmath.findroot \end{verbatim} 
was used and to estimate $\phi'_{\i}(z_{\i})$ with arbitrary precision the command
\begin{verbatim} mpmath.diff \end{verbatim} 
was used. 

\subsection{Other applications}

Our methods can be used to estimate other functions in probability theory, such as pointwise estimates of the Fourier-Stieltjes transform of stationary probability measures $\hat{\mu}(x):=\int_0^1 e^{-2\pi i xy}\textup{d}\mu(y)$ and coefficients of the Fourier series of singular measures arising as stationary probability measures.

\subsection{Extension to non constant weight functions}

Let $\Phi$ be a complex contracting iterated function as before. We can extend our results to certain probability measures associated to \emph{non-constant} weight functions. 

In particular, suppose there exists an admissable domain $D$ for $\Phi$ and bounded holomorphic functions $\p=\{p_i\}_{i\in \I}$, $p_i: D \to \C$ with the property that
\begin{enumerate}
\item $\sum_{i \in \I} p_i(z)=1$ for all $z \in D$,
\item $p_i([0,1]) \subset (0,1)$ for all $i \in \mathcal{I}$.
\end{enumerate}
Then we can obtain an analogue of Theorem \ref{main} for the unique probability measure $\mu=\mu^{(\Phi,\p)}$ supported on the attractor of $\Phi$ with the property that
\begin{equation}\label{SPMeq}
\int \varphi(x) \textup{d}\mu(x) = \sum_{i\in\I}  \int p_i(x) \varphi \circ \phi_i (x) \textup{d}\mu (x)
\end{equation}
for every continuous function $\varphi:[0,1]\to \R$  . This probability measure is called the stationary probability measure associated to $\Phi$ and $\p$, see \cite{Fan_Lau_1999}. It generalises the stationary probability measure considered for constant weight functions.

By following the proof of Theorem \ref{main} with the operator
$$\l_sf(z)= \sum_{i=1}^N p_i(z) \exp(sg(\phi_i (z)))f(\phi_i(z))$$
one can obtain the following analogue of Theorem \ref{main}.

\begin{thm}\label{generalization_Theorem}
Let $\Phi=\{\phi_i\}_{i \in \I}$, $\p=\{p_i\}_{i\in\I}$ and $\mu=\mu^{\Phi, \p}$ be as above. Given $g\in \mathcal{C}_{\Phi},$ define
$$t_m:=\sum_{\i \in \I^m}  \frac{p_{\i}(z_{\i}) }{1-\phi_{\i}'(z_{\i})}$$
and
$$\tau_m:=\sum_{\i \in \I^m} p_{\i}(z_{\i}) \frac{g(z_{\i})+ g(z_{\sigma \i})+ \ldots + g(z_{\sigma^m \i}) }{1-\phi_{\i}'(z_{\i})},$$
where
$$ p_{\i}(z) := p_{i_m }(z) p_{i_{m-1} }(\phi_{i_m}z)\cdots p_{i_1}( \phi_{i_2 \ldots i_m} z ).$$
Define $\alpha_n$, $a_n$ and $\mu_k(g)$ as in Theorem \ref{main}. We have
\begin{eqnarray}
\left|\int g(x) \textup{d} \mu (x)-\mu_k(g)\right| < C\exp(-\lambda k^2)
\label{error}
\end{eqnarray} for some constants $C, \lambda>0$ which are independent of $k.$
\end{thm}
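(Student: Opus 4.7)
The plan is to follow the proof of Theorem \ref{main} essentially step-by-step, with the modified operator
$$\l_s f(z) = \sum_{i \in \I} p_i(z) \exp(s g(\phi_i(z))) f(\phi_i(z))$$
in place of the original one. First I would verify that $\l_s$ fits into the framework of Proposition \ref{bj}: setting $w_i^{(s)}(z) := p_i(z)\exp(s g(\phi_i(z)))$, each $w_i^{(s)}$ is bounded holomorphic on the admissible domain $D$, with $\norm{w_i^{(s)}}_{\infty}$ bounded uniformly in $|s|\le 1$. Thus $\l_s$ is trace-class on $A^2(D)$ with eigenvalue bounds $\lambda_n(\l_s)\le C\exp(-\lambda n^2)$ uniform in $|s|\le 1$, and the trace formula of Proposition \ref{bj}(2), applied to the weight cocycle $w_{\i}^{(s)}(z_{\i})=p_{\i}(z_{\i}) \exp(s( g(z_{\i})+\cdots+g(z_{\sigma^{n-1}\i})))$, gives exactly the expressions whose evaluation and $s$-derivative at $s=0$ produce the $t_m$ and $\tau_m$ in the statement.

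Next I would re-establish the analogue of Proposition \ref{spectral}. The constant function $\one$ is an eigenfunction of $\l_0$ with eigenvalue $1$ because $\sum_i p_i(z) \equiv 1$ on $D$. To show that $1$ is an algebraically simple leading eigenvalue, the maximum-modulus argument from Proposition \ref{spectral} must be adapted, since $p_i(z)$ need not be real and positive on $D\setminus[0,1]$. The idea is to exploit that the iterate
$$\l_0^n f(z) = \sum_{\i \in \I^n} p_{\i}(z) f(\phi_{\i}(z))$$
only evaluates $f$ at points $\phi_{\i}(z)$ which, by uniform contraction of $\Phi$, lie in arbitrarily small neighbourhoods of the attractor $\Lambda\subset [0,1]$ once $n$ is large. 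On such neighbourhoods the continuity of $p_i$ and the hypothesis $p_i([0,1])\subset(0,1)$ make the weights essentially positive, while the cocycle identity $\sum_{\i \in \I^n} p_{\i}(z) \equiv 1$ persists on $D$; the original maximum-modulus reasoning then applies to any $\l_0$-eigenfunction of modulus $\ge 1$ and forces it to be constant on a neighbourhood of $\Lambda$, hence on $D$ by analytic continuation. An alternative is to import a spectral gap for transfer operators with holomorphic positive weight systems, as is standard in the Fan--Lau framework \cite{Fan_Lau_1999}. With the spectral gap at $s=0$ in hand, the analytic perturbation theorem \ref{apt} gives a simple leading eigenvalue $\lambda_1(s)$ holomorphic near $0$.

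The identification $\int g\,\textup{d}\mu = \lambda_1'(0)$ then proceeds as in Proposition \ref{spectral}(4): for the perturbed eigenfunction $h_s = \mathcal{P}_s \one$ and a point $z_0 \in \Gamma\cap(0,1)$, one expands $\lambda_1(s)^n h_s(z_0) = \l_s^n h_s(z_0)$, differentiates at $s=0$, uses $h_0 = \one$, and appeals to the fact that the weighted Birkhoff average $\sum_{\i\in\I^n} p_{\i}(z_0) g(\phi_{\i}(z_0))$ converges to $\int g\,\textup{d}\mu$ by the defining property \eqref{SPMeq} of the stationary measure $\mu^{(\Phi,\p)}$. The algebraic identity
$$\lambda_1'(0) = \frac{\sum_{n=0}^\infty b_n'(0)}{\sum_{n=0}^\infty n\, b_n(0)}$$
for the Fredholm-determinant coefficients $b_n(s)$ of $\det(\id - z\l_s)$ and the super-exponential error estimate then follow verbatim from Proposition \ref{tc prop} and Proposition \ref{main prop}, since those arguments rely only on the trace-class eigenvalue bound of step one and the formula (\ref{tc}). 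Unpacking $b_n(0)$ and $b_n'(0)$ through (\ref{tc}) and the trace formula produces the $a_n$ and $\alpha_n$ of Theorem \ref{main}, with $t_m, \tau_m$ now carrying the non-constant weight cocycle $p_{\i}(z_{\i})$ exactly as stated.

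The main obstacle is the second step, namely establishing the spectral gap of $\l_0$ on $A^2(D)$ under the weakened hypothesis that the weights $p_i$ are merely holomorphic and only required to be real and positive on $[0,1]$. The constant-weight proof used $p_i\ge 0$ and $\sum_i p_i = 1$ pointwise on $D$; in the generalised setting one must either bootstrap through iterates into a neighbourhood of $\Lambda$ where positivity is approximately restored, or invoke a Ruelle--Perron--Frobenius type spectral gap from the thermodynamic formalism literature for this class of operators. Once this is in place, the rest of the argument is essentially mechanical.
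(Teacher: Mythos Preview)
Your proposal is correct and follows essentially the same approach as the paper, which simply asserts that the result is obtained by repeating the proof of Theorem \ref{main} with the modified operator $\l_s f(z)=\sum_i p_i(z)\exp(sg(\phi_i z))f(\phi_i z)$. You correctly isolate the spectral gap for $\l_0$ as the only non-routine step; the paper's own (commented-out) remarks point to the Krein--Rutman theorem for part (2) of Proposition \ref{spectral} and to the Fan--Lau uniqueness result \cite{Fan_Lau_1999} for part (4), both of which are consistent with the alternatives you propose.
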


\vspace{5mm}

\noindent \textbf{Acknowledgements.}

IC would like to express his gratitude to Felipe Serrano and  Franziska Schl\"osser for suggestions around the effective implementation of the algorithms. IC was partially supported by CONICYT PIA ACT172001. NJ was financially supported by the \emph{Leverhulme Trust} (Research Project Grant number RPG-2016-194). Both authors would like to thank Ian Morris for helpful discussions and suggestions.

\end{document}